\newtheorem{lemma}{Lemma}[section]
\newtheorem{prop}[lemma]{Proposition}
\newtheorem{coro}[lemma]{Corollary}
\newtheorem{theo}[lemma]{Theorem}
\newtheorem{rema}[lemma]{Remark}
\newtheorem{defi}[lemma]{Definition}
\newcommand{\sss}{\mathbb{S}}
\newcommand{\ttt}{\mathbb{T}}
\newcommand{\ungra}{\textbf{1}}
\DeclareMathOperator{\SO}{SO}
\DeclareMathOperator{\SU}{SU}
\title[Exact convergence rate in the ergodic theorem]{The exact convergence rate in the ergodic theorem of Lubotzky-Phillips-Sarnak}
\author{A. Pinochet Lobos}
\address{Aix Marseille Universit\'e, CNRS, Centrale Marseille, I2M, Marseille, France}
\email{antoine.pinochet-lobos@univ-amu.fr}
\author{Ch. Pittet}
\address{Aix Marseille Universit\'e, CNRS, Centrale Marseille, I2M, Marseille, France,
 and Section de math\'ematiques, Universit\'e de Gen\`eve, Gen\`eve, Suisse}
\email{pittet@math.cnrs.fr}
\keywords{von Neumann ergodic theorem, convergence rate, discrepancy, quasi-regular representation, Koopman representation, Harish-Chandra's function, equidistribution}
\subjclass[2000]{Primary: 37A15; Secondary: 37A30}
\thanks{This research was conducted during a CRCT of the second author.}
\date{October 17th, 2018}
\begin{document}

\maketitle

\begin{abstract} We compute exact convergence rates in von Neumann type ergodic theorems when the acting group of measure-preserving transformations is free and the means are taken over spheres or over balls defined by a word metric. Relying on the upper bounds on the spectra of Koopman operators deduced by Lubotzky, Phillips, and Sarnak, from Deligne's work on the Weil conjecture, we compute the exact convergence rate for the free groups (of rank $(p+1)/2$ where $p\equiv 1\mod 4$ is prime) of isometries of the round sphere, defined by Lipschitz quaternions. We also show that any finite rank free group of automorphisms of the torus, realizes the lowest possible discrepancy, and prove a matching upper bound on the convergence rate. 
\end{abstract} 

\tableofcontents

\section{Introduction}

\subsection{Convergence rates on the sphere}
In  \cite{LPS-1} and \cite{LPS-2} the  theory of automorphic forms and the theory of unitary representations are applied to compute the discrepancy of orbit points of Lipschitz quaternions on the $2$-sphere. More precisely, let $q=x_0+x_1i+x_2j+x_3k$ be a quaternion and let
$N(q)=x_0^2+x_1^2+x_2^2+x_3^2$ be its norm. let $p$ be a prime such that $p\equiv1\mod 4$. Let $\Sigma_{p+1}\subset \SO(3,\mathbb R)$ denote the image of the set of Lipschitz quaternions
\[
\{q=x_0+x_1i+x_2j+x_3k: x_0,x_1,x_2,x_3\in\mathbb Z: N(q)=p: x_0>0: x_0\equiv1\mod 2\}
\]
under the adjoint representation. Let $\nu$ be the probability Lebesgue measure on the round sphere $\sss^2$.
\begin{theo}(Lubotzky-Phillips-Sarnak \cite[Theorem 1.3, Theorem 1.5]{LPS-1}.)
The subgroup $\Gamma$ of $\SO(3,\mathbb R)$ generated by the symmetric set $\Sigma_{p+1}$ is free of rank $\frac{p+1}{2}$ and 
\[
\sup_{\|f\|_2=1}\left\|x\mapsto\left(\frac{1}{|\Sigma_{p+1}|}\sum_{\gamma\in \Sigma_{p+1}}f(\gamma x)-\int_{\sss^2}f(y)d\nu(y)\right)\right\|_2=\frac{2 \sqrt p}{p+1}.
\]
Let $E_n$ be either the sphere  or the ball of $\Gamma$ around $e\in\Gamma$ of radius $n$ with respect to the word metric defined by  $\Sigma_{p+1}$. There is a constant $C>0$ such that for all $n\in\mathbb N$,
\[
\sup_{\|f\|_2=1}\left\|x\mapsto\left(\frac{1}{|E_n|}\sum_{\gamma\in E_n}f(\gamma x)-\int_{\sss^2}f(y)d\nu(y)\right)\right\|_2\leq Cnp^{-n/2}.
\]
\end{theo}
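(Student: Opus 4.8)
The plan is to recast both displayed suprema as operator norms on the space $L^2_0=\{f\in L^2(\sss^2,\nu):\int_{\sss^2}f\,d\nu=0\}$ of mean-zero functions, which is invariant under the Koopman representation $\pi$ of $\Gamma$ on $L^2(\sss^2,\nu)$. Writing $\mu_E=\frac1{|E|}\sum_{\gamma\in E}\gamma\in\mathbb{C}[\Gamma]$ for the uniform average over a finite set $E\subset\Gamma$, the map $f\mapsto\frac1{|E|}\sum_{\gamma\in E}f(\gamma\,\cdot)-\int f\,d\nu$ is precisely $\pi(\mu_E)$ restricted to $L^2_0$ (subtracting the integral is orthogonal projection off the constants), so the two suprema equal $\|\pi(\mu_{\Sigma_{p+1}})|_{L^2_0}\|$ and $\|\pi(\mu_{E_n})|_{L^2_0}\|$. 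The deep input, which I take from Lubotzky--Phillips--Sarnak via Deligne's bounds, is the spectral containment for the self-adjoint operator $t:=\sum_{\gamma\in\Sigma_{p+1}}\pi(\gamma)$, namely $\mathrm{spec}\bigl(t|_{L^2_0}\bigr)\subseteq[-2\sqrt p,\,2\sqrt p]$ (the Koopman spectrum on $L^2_0$ is no larger than the $(p+1)$-regular tree, i.e. Ramanujan, bound). Since $\pi(\mu_{\Sigma_{p+1}})=t/(p+1)$, this yields the first equality up to the matching lower bound, the attainment being the statement that the spectrum reaches the edge $\pm2\sqrt p$.

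For the radius-$n$ averages I would exploit the free structure of $\Gamma$. Let $s_n=\sum_{\gamma\in S_n}\pi(\gamma)$ be the sum over the word-sphere $S_n$, so $s_0=I$ and $s_1=t$. Counting reduced words gives $|S_n|=(p+1)p^{n-1}$, and comparing products in $\mathbb{C}[\Gamma]$ gives the three-term recursion $t\,s_n=s_{n+1}+p\,s_{n-1}$ for $n\ge2$ together with the initial relation $t\,s_1=s_2+(p+1)s_0$. Hence $s_n=P_n(t)$ for polynomials determined by $P_0=1$, $P_1=X$, $P_2=X^2-(p+1)$ and $P_{n+1}=XP_n-pP_{n-1}$ ($n\ge2$). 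By the spectral theorem applied to $t|_{L^2_0}$ we get $\|s_n|_{L^2_0}\|=\sup_{\lambda\in\mathrm{spec}}|P_n(\lambda)|\le\sup_{|\lambda|\le2\sqrt p}|P_n(\lambda)|$, and for the ball $\sum_{k=0}^n s_k=Q_n(t)$ with $Q_n=\sum_{k=0}^n P_k$.

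The heart of the matter is the substitution $\lambda=2\sqrt p\cos\theta$, which turns the recursion into the Chebyshev recursion; solving it one finds, for $n\ge1$, the closed form $P_n(2\sqrt p\cos\theta)=p^{n/2}\bigl(U_n(\cos\theta)-p^{-1}U_{n-2}(\cos\theta)\bigr)$, where $U_k$ is the Chebyshev polynomial of the second kind. Because $\sup_{[-1,1]}|U_k|=U_k(1)=k+1$, this gives the uniform bound $\sup_{|\lambda|\le2\sqrt p}|P_n(\lambda)|\le 2(n+1)p^{n/2}$, the maximum being attained at the spectral edge $\lambda=\pm2\sqrt p$, where the two characteristic roots $\sqrt p\,e^{\pm i\theta}$ collide and produce the extra linear factor $n$. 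Dividing by $|S_n|=(p+1)p^{n-1}$ gives $\|\pi(\mu_{S_n})|_{L^2_0}\|\le C n p^{-n/2}$; for the ball one bounds $\sup|Q_n|\le\sum_{k=0}^n 2(k+1)p^{k/2}\le C' n p^{n/2}$ (geometric domination by the top term) and divides by $|B_n|=1+(p+1)\frac{p^n-1}{p-1}\ge c\,p^n$ to obtain the same rate.

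The step I expect to be the main obstacle is the edge analysis of the third paragraph: one must control $P_n$ uniformly right up to $\lambda=\pm2\sqrt p$, exactly where the characteristic roots degenerate into a double root and the bare $p^{n/2}$ growth acquires the multiplicative factor $n$. The clean way through is the explicit identity $P_n=p^{n/2}(U_n-p^{-1}U_{n-2})$ with coefficients independent of $n$, after which the standard bound $\|U_k\|_{L^\infty[-1,1]}=k+1$ finishes everything; all remaining steps reduce to the combinatorial word-counting recursion and the cited spectral containment.
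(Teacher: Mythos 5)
Your reduction of both suprema to operator norms $\|\pi_0(\mu_E)\|$ is exactly the paper's Formula \ref{formula: discrepancy}, and your Chebyshev analysis of the sphere polynomials is correct and is a genuinely different route for the \emph{upper} bounds: you estimate $\sup_{|\lambda|\le 2\sqrt p}|P_n(\lambda)|$ directly from the identity $P_n(2\sqrt p\cos\theta)=p^{n/2}\bigl(U_n(\cos\theta)-p^{-1}U_{n-2}(\cos\theta)\bigr)$ and $\sup_{[-1,1]}|U_k|=k+1$, whereas the paper notes that this supremum equals $\|\rho_{\Gamma}(T_n)\|$ (spectral theorem applied to the regular representation, using Kesten's computation $\sigma(\rho_{\Gamma}(T_1))=[-2\sqrt p,2\sqrt p]$) and then evaluates $\|\rho_{\Gamma}(T_n)\|$ in closed form via the Harish-Chandra function of the tree (Proposition \ref{prop: computing the norm of the regular representation}). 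Your route is more elementary and suffices for $Cnp^{-n/2}$; the paper's transfer buys the exact constant $\bigl(1+\tfrac{p-1}{p+1}n\bigr)p^{-n/2}$ needed for its stronger Theorem \ref{thm: main}.

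However, two parts of the statement are not proved in your proposal, and both gaps are genuine. First, the freeness of $\Gamma$ of rank $\tfrac{p+1}{2}$ is itself an assertion of the theorem, and your argument relies on it essentially (the count $|S_n|=(p+1)p^{n-1}$ and the recursion $t\,s_n=s_{n+1}+p\,s_{n-1}$ both fail in the presence of relations), yet it is nowhere established. Second, for the first displayed equality you only obtain ``$\leq$''; for ``$\geq$'' you write that the spectrum reaches the edge $\pm 2\sqrt p$, but that is a restatement of the claim, not an argument --- and importing it from Lubotzky--Phillips--Sarnak amounts to citing the very theorem under proof (the paper is explicit that it uses from them \emph{only} the inclusion $\sigma(\pi_0(\ungra_{\Sigma_{p+1}}))\subset[-2\sqrt p,2\sqrt p]$, not the equality of spectra). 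The paper closes both gaps with one further ingredient, Theorem \ref{theo: lower bound on discrepancy}: $\|\pi_0(m)\|\geq\|\rho_{\Gamma}(m)\|$ for every positive $m\in\C[\Gamma]$, proved either by Shalom's weak containment theorem or by a short self-contained argument (a splitting $B_+,B_-$ furnished by atomlessness, plus Kesten's formula $\|\rho_{\Gamma}(m)\|=\limsup_n m^{(n)}(e)^{1/n}$). Combined with the Deligne inclusion this gives $2\sqrt p\geq\|\pi_0(\ungra_{\Sigma_{p+1}})\|\geq\|\rho_{\Gamma}(\ungra_{\Sigma_{p+1}})\|$, which forces $\Gamma$ to be free of rank $\tfrac{p+1}{2}$ by Kesten's spectral characterization of free groups, and then, since $\|\rho_{\Gamma}(\mu_{\Sigma_{p+1}})\|=\tfrac{2\sqrt p}{p+1}$ for such a free group, the same inequality supplies the matching lower bound. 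Without this ingredient (or a substitute for it), your proposal proves only the final displayed inequality, and only conditionally on freeness.
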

In the above statement, the expression $\|x\mapsto \varphi(x)\|_2$ denotes the $L^2$-norm $\|\varphi\|_2$ of a function $\varphi$. In what follow we will often shorten the notation, writing  $\|\varphi(x)\|_2$ instead of $\|x\mapsto \varphi(x)\|_2$.  

The next theorem is our main result. It generalizes \cite[Theorem 1.3]{LPS-1} and strengthens \cite[Theorem 1.5]{LPS-1}. 

\begin{theo}\label{thm: main}
Let $\Gamma$ be the free subgroup of rank $\frac{p+1}{2}$ of $\SO(3,\mathbb R)$ generated by the symmetric set $\Sigma_{p+1}$. Let $S_n$, respectively $B_n$, be the sphere, respectively the ball, of $\Gamma$ around $e\in\Gamma$ of radius $n$ with respect to the word metric defined by  $\Sigma_{p+1}$. Then
\[
\sup_{\|f\|_2=1}\left\|\frac{1}{|S_n|}\sum_{\gamma\in S_n}f(\gamma x)-\int_{\sss^2}f(y)d\nu(y)\right\|_2=\left(1+\frac{p-1}{p+1}n\right)p^{-n/2},
\]

\[
\sup_{\|f\|_2=1}\left\|\frac{1}{|B_n|}\sum_{\gamma\in B_n}f(\gamma x)-\int_{\sss^2}f(y)d\nu(y)\right\|_2=c(p,n)\left(1+\left(1+\frac{1}{\sqrt p}\right)n\right)p^{-n/2},
\]
where $c(p,n)=\frac{p-1}{p+1-\frac{2}{p^n}}$.
\end{theo}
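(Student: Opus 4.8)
The plan is to diagonalise the whole family of averaging operators simultaneously against the single self-adjoint operator supplied by Lubotzky--Phillips--Sarnak. Write $\pi$ for the Koopman representation of $\Gamma$ on the orthogonal complement $L^2_0(\sss^2,\nu)$ of the constants, so that for every finite $E\subseteq\Gamma$ the quantity to be estimated is the operator norm $\bigl\||E|^{-1}\sum_{\gamma\in E}\pi(\gamma)\bigr\|$ on $L^2_0$. Set $\sigma=\sum_{\gamma\in\Sigma_{p+1}}\pi(\gamma)$; since $\Sigma_{p+1}$ is symmetric, $\sigma$ is self-adjoint, and the first displayed theorem of Lubotzky--Phillips--Sarnak says exactly that $\|\sigma\|=2\sqrt p$, hence $\mathrm{Spec}(\sigma)\subseteq[-2\sqrt p,2\sqrt p]$ with spectral radius equal to $2\sqrt p$. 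First I would record the operators $s_n=\sum_{\gamma\in S_n}\pi(\gamma)$ and $b_n=\sum_{j=0}^n s_j$, together with the counts $|S_n|=(p+1)p^{n-1}$ for $n\ge1$ and $|B_n|=\frac{(p+1)p^n-2}{p-1}$.

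The key algebraic input is that, because $\Gamma$ is free of rank $\frac{p+1}{2}$, every reduced word of length $n\ge1$ has exactly $p$ non-cancelling right continuations, so the sphere sums obey the convolution recursion $\sigma s_n=s_{n+1}+p\,s_{n-1}$ for $n\ge2$, with $\sigma s_0=s_1$ and $\sigma s_1=s_2+(p+1)s_0$. Consequently each $s_n$ is a polynomial in $\sigma$, and by the functional calculus $\|s_n\|=\max_{\lambda\in\mathrm{Spec}(\sigma)}|s_n(\lambda)|$. Solving the linear recursion, whose characteristic roots are $\sqrt p\,e^{\pm i\theta}$ under the substitution $\lambda=2\sqrt p\cos\theta$, I would obtain the closed form
\[
s_n(\lambda)=p^{n/2}\left(2\cos n\theta+\frac{p-1}{p}\,\frac{\sin (n-1)\theta}{\sin\theta}\right),
\]
which at the top of the spectrum ($\theta\to0$, i.e. $\lambda=2\sqrt p$) degenerates, the double root producing the linear term $s_n(2\sqrt p)=\bigl(\frac{p+1}{p}+\frac{p-1}{p}n\bigr)p^{n/2}$.

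To turn this into an operator norm I must show the maximum of $|s_n|$ over $[-2\sqrt p,2\sqrt p]$ is attained at the endpoint. This is where the elementary inequality $|\sin m\theta|\le m|\sin\theta|$ does the work: it gives $|s_n(\lambda)|\le p^{n/2}\bigl(2+\frac{p-1}{p}(n-1)\bigr)=s_n(2\sqrt p)$ for every $\theta$, while the reverse inequality on the spectrum follows because the spectral radius $2\sqrt p$ is attained and $s_n$ is continuous (the sign ambiguity at $-2\sqrt p$ being harmless since $s_n(-\lambda)=(-1)^ns_n(\lambda)$). Dividing by $|S_n|$ then yields the sphere formula. The step I expect to be the main obstacle is precisely this passage from ``spectral radius equals $2\sqrt p$'' to ``the norm of the polynomial $s_n$ equals its value at $2\sqrt p$'': one must check both that the maximiser of the polynomial sits at the extreme point of the interval and that this point is genuinely reached by the spectrum rather than merely approached.

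For the ball I would proceed identically with $b_n=\sum_{j=0}^n s_j$. The per-frequency bound $|s_j(\lambda)|\le s_j(2\sqrt p)$ just established, combined with the positivity of each $s_j(2\sqrt p)$ and the triangle inequality, gives $|b_n(\lambda)|\le\sum_{j=0}^n s_j(2\sqrt p)=b_n(2\sqrt p)$ on the whole interval, so $\|b_n\|=b_n(2\sqrt p)$ by the same continuity argument. It then remains only to sum the geometric-type series: a direct computation gives $b_n(2\sqrt p)=\bigl(1+(1+\tfrac1{\sqrt p})n\bigr)p^{n/2}$, and since $|B_n|^{-1}=\frac{p-1}{(p+1)p^n-2}=c(p,n)p^{-n}$, dividing produces exactly $c(p,n)\bigl(1+(1+\tfrac1{\sqrt p})n\bigr)p^{-n/2}$, as claimed.
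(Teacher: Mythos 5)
Your sphere argument is correct, and it takes a genuinely different route from the paper's. You use the exact Lubotzky--Phillips--Sarnak norm $\|\sigma\|=2\sqrt p$ for \emph{both} directions of the equality, solving the Hecke recursion in Chebyshev form and maximizing the resulting polynomial over $[-2\sqrt p,2\sqrt p]$. The paper instead splits the two directions: the upper bound uses only the spectral inclusion $\mathrm{Spec}(\sigma)\subseteq[-2\sqrt p,2\sqrt p]$ together with Kesten's computation $\mathrm{Spec}(\rho_\Gamma(T_1))=[-2\sqrt p,2\sqrt p]$ to transfer the estimate to the regular representation, while the lower bound comes from a general theorem ($\|\pi_0(m)\|\geq\|\rho_\Gamma(m)\|$ for every positive $m\in\mathbb C[\Gamma]$, proved via Shalom's weak containment result or a short self-contained argument), the numerical value being read off the Harish-Chandra function of the tree in the appendix. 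Your closed form for $s_n(\lambda)$, the bound $|\sin m\theta|\leq m|\sin\theta|$, and the parity identity $s_n(-\lambda)=(-1)^n s_n(\lambda)$ are all correct, and that parity identity is exactly what makes ``one of $\pm 2\sqrt p$ lies in the spectrum'' sufficient for the sphere. Your arithmetic for both formulas also checks out.

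The ball case, however, has a genuine gap. There you need the point $+2\sqrt p$ itself to lie in $\mathrm{Spec}(\sigma)$, but everything you have established --- $\|\sigma\|=2\sqrt p$, hence the spectral radius $2\sqrt p$ is attained --- only guarantees that at least one of $\pm 2\sqrt p$ belongs to $\mathrm{Spec}(\sigma)$. The polynomial $b_n=\sum_{j=0}^n s_j$ has no parity, so the trick that saved the sphere fails: at the negative endpoint, $b_n(-2\sqrt p)=\sum_{j=0}^n(-1)^j s_j(2\sqrt p)$ is an alternating sum of increasing positive terms, hence $|b_n(-2\sqrt p)|\leq s_n(2\sqrt p)<b_n(2\sqrt p)$ strictly for $n\geq 1$. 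Thus if $\mathrm{Spec}(\sigma)$ were, say, $\{-2\sqrt p\}\cup[-1,0]$ --- a possibility your stated inputs do not exclude --- then $\max_{\lambda\in\mathrm{Spec}(\sigma)}|b_n(\lambda)|$ would be strictly smaller than $b_n(2\sqrt p)$ and the claimed equality would fail; ``the same continuity argument'' does not close this. Two repairs are available. Either invoke the stronger fact, also due to Lubotzky--Phillips--Sarnak and explicitly quoted in the paper, that $\mathrm{Spec}(\sigma)$ equals the whole interval $[-2\sqrt p,2\sqrt p]$, so that in particular $+2\sqrt p\in\mathrm{Spec}(\sigma)$; or prove the lower bound for balls as the paper does, from the general inequality $\|\pi_0(m)\|\geq\|\rho_\Gamma(m)\|$ for positive $m$, applied to $m=\mu_{B_n}$, computing $\|\rho_\Gamma(\mu_{B_n})\|$ by your polynomial maximization applied to the regular representation, where Kesten's theorem guarantees that the spectrum is the full interval and hence does contain the positive endpoint.
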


The result is proved by establishing matching upper and lower bounds on the discrepancy (for the definition of the discrepancy, see Formula \ref{formula: discrepancy} below).

\subsection{Upper bounds: Weil and Deligne}
 The upper bounds are obtained with the help of three main ingredients. The first ingredient is the inclusion of the spectrum of a Koopman operator, associated to the free subgroup of $\SO(3,\mathbb R)$  generated by Lipschitz quaternions, and defined by an Hecke element, in the spectrum of the corresponding operator defined by the regular representation. See Formula \ref{formula: inclusion between spectra} below for the precise statement. To the best of our knowledge, the only known proof of this inclusion, is the one from \cite[S153-S158]{LPS-1} and \cite[Theorem 4.1]{LPS-2}, which uses the theory of automorphic forms and Deligne's solution to the Weil conjecture. The second ingredient is an application of the spectral theorem to Hecke elements. The third one is an identity between the norm of operators defined by the regular representation of a free group and values of the Harish-Chandra function of a free group: see Proposition \ref{prop: computing the norm of the regular representation} below. 

\subsection{Lower bounds: a general fact}
The lower bounds follow from a general result of Shalom announced in \cite[Theorem 4.14]{ShaTata} and also stated in 
\cite[Proposition 7]{DudGri}: if a finitely generated group $\Gamma$ acts by measure-preserving transformations on an atomless probability space $(X,\nu)$, then there is a subgroup $H$ of $\Gamma$, such that the quasi-regular representation of $\Gamma$  on $l^2(\Gamma/H)$, is weakly contained in the restriction of the Koopman representation of $\Gamma $ to the orthogonal complement $L^2_0(X,\nu)$ of the constant functions. 
In the following proposition, we spell-out the consequence of this result we need. 

\begin{prop}\label{prop: H-Ch lower bound on the discrepancy for free groups}
Assume that $\Gamma$ is a free group of rank $r\geq 1$ acting by measure-preserving transformations on an atomless probability space $(X,\nu)$. Let $\{a_1,\dots,a_r\}$ be a free generating set of $\Gamma$. Let $S=\{a_1^{\pm 1},\dots,a_r^{\pm 1}\}$. Let $q=2r-1$. 
Let $S_n$, respectively $B_n$, be the sphere, respectively the ball, of $\Gamma$ around $e\in\Gamma$ of radius $n$ with respect to the word metric defined by  $S$. Then
\[
\sup_{\|f\|_2=1}\left\|\frac{1}{|S_n|}\sum_{\gamma\in S_n}f(\gamma x)-\int_Xf(y)d\nu(y)\right\|_2\geq\left(1+\frac{q-1}{q+1}n\right)q^{-n/2},
\]
\[
\sup_{\|f\|_2=1}\left\|\frac{1}{|B_n|}\sum_{\gamma\in B_n}f(\gamma x)-\int_Xf(y)d\nu(y)\right\|_2\geq c(q,n)\left(1+\left(1+\frac{1}{\sqrt q}\right)n\right)q^{-n/2},
\]
where $c(q,n)=\left(1+2q^{-n}\sum_{k=0}^{n-1}q^k\right)^{-1}$.
\end{prop}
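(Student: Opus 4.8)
The plan is to convert the statement about the Koopman representation into a norm computation in the regular representation of $\Gamma$, using the weak-containment result recalled above as a bridge and a classical comparison of spectral radii as the geometric input.

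First I would note that the quantity on the left-hand side is exactly the operator norm $\|\kappa_0(\mu_{S_n})\|$, where $\kappa_0$ is the restriction of the Koopman representation to $L^2_0(X,\nu)$ and $\mu_{S_n}=\frac{1}{|S_n|}\sum_{\gamma\in S_n}\gamma\in\mathbb{C}[\Gamma]$. Indeed, subtracting $\int_X f\,d\nu$ removes the orthogonal projection onto the constants, a line on which the averaging operator acts as the identity; the discrepancy operator is therefore block-diagonal for the decomposition $L^2=\mathbb{C}\mathbf{1}\oplus L^2_0$ and its norm is attained on $L^2_0$. Since $S_n$ is symmetric, $\mu_{S_n}$ is a symmetric element and $\kappa_0(\mu_{S_n})$ is self-adjoint. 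By Shalom's theorem there is a subgroup $H\leq\Gamma$ with $\lambda_{\Gamma/H}\prec\kappa_0$, and weak containment gives $\|\kappa_0(\mu_{S_n})\|\geq\|\lambda_{\Gamma/H}(\mu_{S_n})\|$ (and likewise for $B_n$). It thus suffices to bound the quasi-regular norm from below by the asserted value.

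Next I would record the recursion in $\mathbb{C}[\Gamma]$ linking the sphere sums $\sigma_n=\sum_{\gamma\in S_n}\gamma$ to the adjacency element $\sigma_1$, namely $\sigma_1^2=\sigma_2+(q+1)\sigma_0$ and $\sigma_1\sigma_n=\sigma_{n+1}+q\,\sigma_{n-1}$ for $n\geq 2$. These show $\sigma_n=Q_n(\sigma_1)$ for an explicit polynomial $Q_n$ of degree $n$ satisfying $Q_n(-x)=(-1)^nQ_n(x)$, which a short induction shows to be positive and strictly increasing on $[2\sqrt q,\infty)$. Being identities in the group algebra, they pass to any representation, so $\lambda_{\Gamma/H}(\sigma_n)=Q_n\!\left(\lambda_{\Gamma/H}(\sigma_1)\right)$.

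The crux is then a spectral estimate for $A:=\lambda_{\Gamma/H}(\sigma_1)$, the adjacency operator of the Schreier graph of $\Gamma/H$ relative to $S$. This graph is $(q+1)$-regular, and, since a subgroup of a free group acts freely on the Cayley tree, its universal cover is the $(q+1)$-regular tree, i.e. the Cayley graph underlying $\lambda_\Gamma$. By the classical fact that the regular tree minimises the spectral radius among regular graphs of a given degree, $\|A\|\geq\|\lambda_\Gamma(\sigma_1)\|=2\sqrt q$. As $A$ is self-adjoint, its spectrum meets $\{\,|t|\geq 2\sqrt q\,\}$; combining the parity $Q_n(-x)=(-1)^nQ_n(x)$, the monotonicity of $Q_n$, and the spectral theorem yields $\|\lambda_{\Gamma/H}(\sigma_n)\|\geq Q_n(2\sqrt q)$. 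Dividing by $|S_n|$ and invoking the computation of the norm of the regular representation (which evaluates $Q_n(2\sqrt q)/|S_n|$ to the Harish-Chandra value $\left(1+\frac{q-1}{q+1}n\right)q^{-n/2}$) finishes the sphere case; the ball case is identical, with $\beta_n=\sum_{k=0}^n\sigma_k=R_n(\sigma_1)$ in place of $\sigma_n$ and the matching value $c(q,n)\left(1+(1+1/\sqrt q)n\right)q^{-n/2}$. The step I expect to require the most care is the spectral comparison for $A$: establishing $\|A\|\geq 2\sqrt q$ for an arbitrary $H$ — in particular when the Schreier graph has loops or multiple edges — through the universal-cover characterisation, after which the polynomial monotonicity and the numerical identification are routine given the regular-representation values.
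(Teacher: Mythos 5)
Your opening two steps---rewriting the discrepancy as $\|\pi_0(\mu_{E_n})\|$ and invoking Shalom's theorem to produce a subgroup $H\leq\Gamma$ with $\lambda_{\Gamma/H}\prec\pi_0$, hence $\|\pi_0(\mu_{E_n})\|\geq\|\lambda_{\Gamma/H}(\mu_{E_n})\|$---are exactly Formula \ref{formula: discrepancy} and the first half of the paper's first proof of Theorem \ref{theo: lower bound on discrepancy}. After that the routes part ways. The paper disposes of $\lambda_{\Gamma/H}$ in one stroke: a quasi-regular representation has positive vectors, so $\|\lambda_{\Gamma/H}(m)\|\geq\|\rho_\Gamma(m)\|$ for \emph{every} positive $m\in\mathbb C[\Gamma]$ by \cite[Lemma 2.3]{ShaAnnals}, and the numerical values are then read off from Proposition \ref{prop: computing the norm of the regular representation}. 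You replace that lemma by a spectral analysis of the Schreier graph of $\Gamma/H$: the bound $\|A\|\geq 2\sqrt q$ for $A=\lambda_{\Gamma/H}(\sigma_1)$, the Hecke polynomials $\sigma_n=Q_n(\sigma_1)$, parity and monotonicity. The crux bound you worry about is correct, and can even be had by bare hands, with no discussion of loops or multiple edges: $\langle A^{2n}\delta_H,\delta_H\rangle$ counts the words of length $2n$ in $S$ lying in $H$, which is at least the number of such words equal to $e$, so Kesten's theorem \cite{Kes} gives $\|A\|\geq\limsup_n\langle A^{2n}\delta_H,\delta_H\rangle^{1/2n}\geq 2\sqrt q$. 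For the spheres your argument is then complete, modulo the point in the last paragraph below, and it is a genuinely different, more combinatorial middle step than the paper's.

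The ball case, however, is \emph{not} ``identical'', and as written your argument fails there. The polynomial $R_n=\sum_{k=0}^nQ_k$ has no parity. Your sphere argument produces a spectral value $t_0$ of $A$ with $|t_0|\geq 2\sqrt q$ but says nothing about its sign; parity is what allowed you to move $t_0$ to the positive axis, and that tool is now gone. If $t_0$ were negative, say $t_0=-2\sqrt q$, your method would yield only $\|\lambda_{\Gamma/H}(\ungra_{B_n})\|\geq|R_n(-2\sqrt q)|=\left|\sum_{k=0}^n(-1)^kQ_k(2\sqrt q)\right|$, an alternating sum; using $Q_k(2\sqrt q)=q^{k/2-1}\left(q+1+(q-1)k\right)$ for $k\geq 1$, one checks that $|R_n(-2\sqrt q)|/R_n(2\sqrt q)$ tends to $\frac{\sqrt q-1}{\sqrt q+1}<1$, so the resulting bound falls short of the claimed one by a definite factor. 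Nothing in your proof rules this scenario out, because you used only the norm of $A$, never its positivity. The repair is the standard Perron--Frobenius observation: $A$ has non-negative matrix entries in the basis $\{\delta_{xH}\}$, so $|\langle Af,f\rangle|\leq\langle A|f|,|f|\rangle$ for all $f$, whence $\|A\|=\sup\sigma(A)$. Thus $\sigma(A)$ meets $[2\sqrt q,\infty)$, and the monotonicity of $Q_n$ and of $R_n$ on $[2\sqrt q,\infty)$ settles spheres and balls simultaneously, with parity becoming superfluous. Note that this positivity input is essentially the positive-vector lemma the paper cites, so the repaired proof is closer to the paper's than it first appears.

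There is a second, smaller gap at the final numerical step, which as cited goes in the wrong direction. Proposition \ref{prop: computing the norm of the regular representation} together with Kesten's theorem and the spectral theorem gives $\sup_{t\in[-2\sqrt q,2\sqrt q]}|Q_n(t)|=|S_n|\left(1+\frac{q-1}{q+1}n\right)q^{-n/2}$; from this one can only conclude $Q_n(2\sqrt q)\leq|S_n|\left(1+\frac{q-1}{q+1}n\right)q^{-n/2}$, which is the opposite of what your lower bound requires. You need either the (true, but unproven in your sketch) fact that this supremum is attained at the endpoint $2\sqrt q$, or, simpler, a direct evaluation: at $x=2\sqrt q$ the recursion $Q_{n+1}=xQ_n-qQ_{n-1}$ has characteristic polynomial $(t-\sqrt q)^2$, so $Q_n(2\sqrt q)=(a+bn)q^{n/2}$, and the initial values $Q_1,Q_2$ give $Q_n(2\sqrt q)=q^{n/2-1}\left(q+1+(q-1)n\right)$; dividing by $|S_n|=(q+1)q^{n-1}$ yields exactly the stated sphere value, and summing gives $R_n(2\sqrt q)=\left(1+\left(1+q^{-1/2}\right)n\right)q^{n/2}$, hence the stated ball value after dividing by $|B_n|$. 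Both gaps are fixable with standard arguments, but both must be fixed for the proof to stand.
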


\begin{rema}\label{rema: amenable} Notice that both lower bounds evaluated at $q=1$ give the value $1$ and in this case both inequalities are equalities. When $q>1$, then $c(q,n)=\frac{q-1}{q+1 -\frac{2}{q^n}}$.
\end{rema}

\subsection{Convergence rates on the torus}
  
It follows from \cite[Theorem 1.4]{LPS-1} that  a generic finitely generated free subgroup of $\SO(3,\mathbb R)$ does \emph{not} realize the lower bounds
of Proposition \ref{prop: H-Ch lower bound on the discrepancy for free groups}. This is in contrast with the group of automorphisms of the torus where any finitely generated free subgroup realizes the fastest possible convergence rate, as stated in the following theorem which easily follows from \cite[Theorem 4.17]{ShaTata} or \cite[20]{dlH}, or ideas presented in \cite{Fin}, or \cite{GabGal}.

\begin{theo}\label{thm: exact convegence rate on the torus} Let $\ttt^2=\mathbb R^2/\mathbb Z^2$ be the $2$-torus with its normalized Haar measure $\nu$ and let $GL(2,\mathbb Z)\simeq \mbox{Aut}(\ttt^2)=G$ be its automorphism group.
	Assume that $\Gamma<G$ is a free subgroup of rank $r\geq 1$ freely generated by $\{a_1,\dots,a_r\}\subset G$. Let $S=\{a_1^{\pm 1},\dots,a_r^{\pm 1}\}$. Let $q=2r-1$. 
	Let $S_n$, respectively $B_n$, be the sphere, respectively the ball, of $\Gamma$ around $e\in\Gamma$ of radius $n$ with respect to the word metric defined by  $S$. Then
	\[
	\sup_{\|f\|_2=1}\left\|\frac{1}{|S_n|}\sum_{\gamma\in S_n}f(\gamma x)-\int_{\ttt^2}f(y)d\nu(y)\right\|_2=\left(1+\frac{q-1}{q+1}n\right)q^{-n/2},
	\]

	\[
	\sup_{\|f\|_2=1}\left\|\frac{1}{|B_n|}\sum_{\gamma\in B_n}f(\gamma x)-\int_{\ttt^2}f(y)d\nu(y)\right\|_2= c(q,n)\left(1+\left(1+\frac{1}{\sqrt q}\right)n\right)q^{-n/2},
	\]
	where $c(q,n)=\left(1+2q^{-n}\sum_{k=0}^{n-1}q^k\right)^{-1}$.
\end{theo}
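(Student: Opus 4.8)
The plan is to prove Theorem \ref{thm: exact convegence rate on the torus} by establishing matching upper and lower bounds, exactly as in the program sketched for Theorem \ref{thm: main}. The lower bounds are already in hand: since $\ttt^2$ with normalized Haar measure is an atomless probability space and $\Gamma$ acts on it by measure-preserving automorphisms, Proposition \ref{prop: H-Ch lower bound on the discrepancy for free groups} applies verbatim and yields precisely the right-hand sides appearing in the two displayed formulas. So the entire content of the theorem is the reverse inequality: for the torus action one must show that these lower bounds are in fact attained, i.e.\ that the discrepancy is no larger than the Harish-Chandra quantity $\left(1+\frac{q-1}{q+1}n\right)q^{-n/2}$ on spheres and the corresponding expression on balls.

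The key to the upper bound is a spectral comparison. First I would recall that the discrepancy appearing on the left-hand sides is the operator norm, on $L^2_0(\ttt^2,\nu)$, of the averaging (Markov) operator $\pi(\mu_n)$, where $\mu_n$ is the uniform probability measure on $S_n$ (respectively $B_n$) and $\pi$ is the Koopman representation of $\Gamma$ on $L^2_0$. Since the lower bound already gives that this norm is at least the norm of the corresponding convolution operator $\lambda(\mu_n)$ in the regular representation of the free group $\Gamma$, the theorem reduces to the \emph{opposite} spectral inclusion: the restriction of the Koopman representation to $L^2_0(\ttt^2,\nu)$ must be weakly contained in (a multiple of) the regular representation of $\Gamma$, so that $\|\pi(\mu_n)\|\leq\|\lambda(\mu_n)\|$. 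This is exactly the statement that the Koopman representation on $L^2_0$ is tempered. Precisely this weak containment is the content of \cite[Theorem 4.17]{ShaTata}, and equivalently follows from the arguments in \cite[20]{dlH}, \cite{Fin}, or \cite{GabGal}: the point is that the automorphism action of $GL(2,\mathbb Z)$ on the dual $\widehat{\ttt^2}=\mathbb Z^2$ has all nonzero orbits infinite, and more strongly that for a free subgroup the stabilizers are such that the quasi-regular representations occurring in the decomposition of $L^2_0$ are weakly contained in the regular representation.

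Once both inclusions $\lambda\prec\pi|_{L^2_0}$ and $\pi|_{L^2_0}\prec\lambda$ are in place, the operator norms coincide, $\|\pi(\mu_n)\|=\|\lambda(\mu_n)\|$, and I would finish by invoking the same computation of $\|\lambda(\mu_n)\|$ used in the proof of Theorem \ref{thm: main}: apply the spectral theorem to the self-adjoint Hecke-type elements and evaluate via the Harish-Chandra function of the free group (Proposition \ref{prop: computing the norm of the regular representation}). Since the generating set $S$ has cardinality $2r$ and $q=2r-1$, this reproduces the closed forms $\left(1+\frac{q-1}{q+1}n\right)q^{-n/2}$ on $S_n$ and $c(q,n)\left(1+\left(1+\frac{1}{\sqrt q}\right)n\right)q^{-n/2}$ on $B_n$, matching the lower bounds and giving equality.

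The main obstacle is establishing the temperedness $\pi|_{L^2_0(\ttt^2,\nu)}\prec\lambda_\Gamma$ cleanly for an \emph{arbitrary} free subgroup $\Gamma<GL(2,\mathbb Z)$ of any rank. The decomposition of $L^2_0(\ttt^2)$ into the characters $\{e^{2\pi i\langle m,\cdot\rangle}:m\in\mathbb Z^2\setminus\{0\}\}$ breaks into $\Gamma$-orbits, and on each orbit one gets a quasi-regular representation $\ell^2(\Gamma/\Gamma_m)$ with $\Gamma_m$ the stabilizer of $m$; one must argue that each such stabilizer is either trivial or infinite-index in a way that forces weak containment in the regular representation, and that there is no obstruction from finite orbits (there are none, apart from $0$, since the action on $\mathbb Z^2\setminus\{0\}$ has no nonzero finite orbits). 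This is precisely the ingredient imported from \cite[Theorem 4.17]{ShaTata}; my plan treats it as a black box, so the only real work beyond citing it is checking that its hypotheses hold for every finitely generated free subgroup of the torus automorphism group and that it delivers the norm inequality in the direction needed.
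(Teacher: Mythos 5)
Your proposal is essentially the paper's own proof: the paper likewise takes the lower bounds from Proposition \ref{prop: H-Ch lower bound on the discrepancy for free groups}, obtains the upper bounds from weak containment of the Koopman representation on $L^2_0(\ttt^2,\nu)$ in the regular representation (citing precisely \cite[Theorem 4.17]{ShaTata} and \cite[20]{dlH}, with \cite[Theorem 7]{dlH} supplying the resulting norm inequality $\|\pi_0(m)\|\leq\|\rho(m)\|$), and concludes with the Harish-Chandra computation of Proposition \ref{prop: computing the norm of the regular representation}. The only structural difference is cosmetic: the paper states the weak containment for the Koopman representation of the full group $G=\mbox{Aut}(\ttt^2)$ and then passes to $\Gamma$ via the identity $\|\rho_G(\mu_{E_n})\|=\|\rho_{\Gamma}(\mu_{E_n})\|$, whereas you assert temperedness of the restriction to $\Gamma$ directly; the two are equivalent, since weak containment survives restriction to subgroups and $\rho_G|_{\Gamma}$ is a multiple of $\rho_{\Gamma}$.

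One detail in your hypothesis-check is false, though it is harmless once repaired. It is not true that the $\Gamma$-action on $\mathbb Z^2\setminus\{0\}$ has only infinite orbits for \emph{every} free subgroup: if $r=1$ and the generator is parabolic, e.g. $a=\left(\begin{smallmatrix}1&1\\0&1\end{smallmatrix}\right)$ (which does generate a free group of rank $1$), then there are nonzero fixed characters, and $\pi_0|_{\Gamma}$ even contains the trivial representation. In that case $q=1$ and $\Gamma\simeq\mathbb Z$ is amenable, so weak containment in $\rho_{\Gamma}$ holds for every unitary representation and both asserted equalities read $1=1$; this is exactly why the paper disposes of the case $q=1$ separately, via Corollary \ref{cor: amenable} and Remark \ref{rema: amenable}, before invoking the weak containment argument. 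A second, related imprecision: what actually forces $\ell^2(\Gamma/\Gamma_m)\prec\rho_{\Gamma}$ is not that the stabilizers $\Gamma_m$ be trivial or of infinite index (infinite index alone does not suffice), but that they be \emph{amenable}; this does hold here, since stabilizers of nonzero characters in $GL(2,\mathbb Z)$ are virtually cyclic, so their intersections with the torsion-free group $\Gamma$ are trivial or infinite cyclic.
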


\subsection{Related references}
 We close this introduction by mentioning several works related to  \cite{LPS-1} and \cite{LPS-2}. The five pages paper of Arnol'd and Krylov \cite{ArnKry} is one of the earlier reference on free groups of rotations. Lubotzky's book \cite{Lub} (specially Chapter 9) is a general reference to the subject  and its numerous ramifications. Colin de Verdi\`ere has given a s\'eminaire Bourbaki \cite{Colin} on \cite{LPS-1}, \cite{LPS-2}.  Shalom's  survey \cite{ShaTata} presents estimates of the discrepancy of random points. In a series of paper, Bourgain and Gamburd (see \cite{BouGan} and references therein)  construct finite symmetric sets in $\SU(d)$ whose Koopman's operators have norms smaller than $1$. Clozel \cite{Clo} has obtained sharp  bounds (up to multiplicative constants) on the discrepancies of some subsets of $\SO(2n)$. In \cite{COU}, Clozel, Oh, and Ullmo, express convergence rates for ergodic theorems on locally symmetric spaces, in terms of Harish-Chandra functions. The survey \cite{GorNevBul} of Gorodnik and Nevo includes many convergence rates estimates. In \cite{EMV}, Ellenberg, Michel, Venkatesh, discuss and improve Linnik's work on the distribution of the spatial distribution of point sets on the $2$-sphere, obtained from the representation of a large integer as a sum of three integer squares. In \cite{BRS1} and \cite{BRS2}, Bourgain, Rudnick, Sarnak, evaluate this distribution through different ``statistics'', and compare it with what's happening in higher dimension, and with the case of random points. Parzanchevski and Sarnak have shown  that the optimal generating rotations $\Sigma_{p+1}\subset \SO(3,\mathbb R)$ can be used to construct efficient gates needed as building blocks for quantum algorithms \cite{ParSar}. 

\subsection{Acknowledgements}
We are very grateful to Pierre de la Harpe who's suggestions and remarks on an earlier version enabled us to substantially improve the quality of the exposition. We also warmly thank Tatiana Smirnova-Nagnibeda, Pascal Hubert, Anders Karlsson, Arnaldo Nogueira, and Alain Valette for their interest in this work.  We thank Alex Lubotzky for pointing out to us reference \cite{ParSar}. We are grateful
to Pierre-Emmanuel Caprace, for suggesting to us a generalization of Theorem \ref{theo: lower bound on discrepancy} below, to the case of locally compact groups (see Remark 
\ref{rem: Pierre-Emmanuel} below).

\section{Discrepancy and the Koopman representation}

In this section we recall the relevant  definitions and notation from representation theory needed for the proofs of the results stated in the introduction. 
 
\subsection{Three involutive algebras}
Let $\Gamma$ be a group. Recall that the formal linear combinations  of elements of $\Gamma$ with complex coefficients
\[
	\sum_{\gamma\in\Gamma}a_{\gamma}\gamma,
\]
form the group algebra $\mathbb C[\Gamma]$ of $\Gamma$ over the complex numbers. Each element of $\mathbb C[\Gamma]$ has a norm defined as:
\[
	\left\|\sum_{\gamma\in\Gamma}a_{\gamma}\gamma\right\|_1=\sum_{\gamma\in\Gamma}|a_{\gamma}|.
\]
The space 
\[
l^1(\Gamma)=\{f:\Gamma\to\mathbb C: \|f\|_1=\sum_{\gamma\in\Gamma}|f(\gamma)|<\infty\}	
\]
of summable functions on $\Gamma$,
is a convolution algebra for the law
\[
	f*g(x)=\sum_{\gamma\in\Gamma}g(\gamma^{-1}x)f(\gamma),\, \forall x\in\Gamma.
\]
There is a unique embedding of involutive unital algebras of $\mathbb C[\Gamma]$ into $l^1(\Gamma)$, which sends each $\gamma\in\mathbb C[\Gamma]$ to the characteristic function $\delta_{\gamma}\in l^1(\Gamma)$ of $\gamma$.
Let $\pi:\Gamma\to U({\mathcal H})$ be a unitary representation of $\Gamma$ on a Hilbert space ${\mathcal H}$. Let $B({\mathcal H})$ be the involutive algebra of bounded operators on ${\mathcal H}$. If $T\in B({\mathcal H})$ we denote by $\|T\|$ its operator norm. There is a  unique  morphism of involutive unital algebras from $l^1(\Gamma)$ to $B({\mathcal H})$ whose restriction to $\Gamma$ equals $\pi$. We also denote this morphism by $\pi$.

%An element $\sum_{\gamma\in\Gamma}a_{\gamma}\gamma\in\mathbb C[\Gamma]$ is positive if $\forall\gamma\in\Gamma,\, a_{\gamma}\geq 0$.
Let $E$ be a finite subset of $\Gamma$ and let $|E|$ be its cardinality. We denote the element of $\mathbb C[\Gamma]$ defined as the sum over elements of $E$ by
\[
	\ungra_E=\sum_{\gamma\in E}\gamma
\]and we define $\mu_E$ to be $\frac{1}{\vert E \vert}\ungra_E$.

Let $l^2(\Gamma)$ be the Hilbert space of square integrable functions on $\Gamma$. Let $\rho_{\Gamma}:\Gamma\to U(l^2(\Gamma))$ be the right regular representation: 
\[
	 \rho_{\Gamma}(\gamma)f(x)=f(x\gamma),\,\forall x,\gamma\in\Gamma.
\]

\subsection{Koopman representations}
Let $X$ be a probability space and let $\nu$ be a probability measure on $X$ without atom. Let  $G=\mbox{Aut}(X,\nu)$ be the group of all measure-preserving transformations of $X$. (Any element in $G$ is represented by a map $g:X\to X$ which is one-to-one and onto, such that $\nu(g^{-1}B)=\nu(B)$ for any $B$ in the $\sigma$-algebra on which $\nu$ is defined. Two such transformations are identified if and only if they coincide on a set of full measure.) Let ${\mathcal H}=L^2(X,\nu)$ be the Hilbert space of complex square integrable functions on $X$. If $f\in L^2(X,\nu)$ we denote its norm by $\|f\|_2$.
Let 
$\pi:G\to U(L^2(X,\nu))$ be the Koopman representation:
\[
	\pi(g)f(x)=f(g^{-1}x).
\]

\subsection{Discrepancy}
Let $1_X\in {\mathcal H}$ be the characteristic function of $X$. Let $P\in B({\mathcal H})$ be the orthogonal projection onto the complex line generated by $1_X$. We have $P^2=P$, $P=P^*$, $P\pi(g)=\pi(g)P=P$ for all $g\in G$. Let $\pi_0$ denote the restriction of $\pi$ to the kernel 
\[
	\mbox {Ker} P=L_0^2(X,\nu)=\{f\in L^2(X,\nu): \int_Xf(x)d\nu(x)=0\}.
\]
\begin{defi} Let $E=E^{-1}$ be a finite subset of $G$. The discrepancy of $E$ is defined as the norm $\|\pi_0(\mu_E)\|$ of the operator
$\pi_0(\mu_E):L_0^2(X,\nu)\to L_0^2(X,\nu)$.
\end{defi}

It is easy to check that 
\begin{align}\label{formula: discrepancy}
\|\pi_0(\mu_E)\|=\sup_{\|f\|_2=1}\left\|\frac{1}{|E|}\sum_{\gamma\in E}f(\gamma x)-\int_Xf(y)d\nu(y)\right\|_2,
\end{align}
where the supremum is taken  over $f$ running in the unit sphere of the whole Hilbert space $L^2(X,\nu)$.

%More generally, if $F:\{1;\cdots ;N\}\to G$ is a map, we write for $1\leq k\leq N$,  $\gamma_k=F(k)$, and consider the element
%	\[
%		\mu_{F+F^*}=\frac{1}{2N}\sum_{k=1}^N\left(\gamma_k+\gamma_k^{-1}\right)\in\mathbb C[\Gamma].
%	\]
%Is is not difficult to check that an element $\sum_{\gamma}a_{\gamma}\gamma\in\mathbb C[\Gamma]$ can be expressed as $\mu_{F+F^*}$ for some family $F$, if and only if
%the coefficients $a_{\gamma}$ are rational numbers between $0$ and $1$ which add up to $1$, and satisfy $a_{\gamma}=a_{\gamma^{-1}}$.	
%We define the discrepancy of the family $F$ made symmetric as
%\[
%	\delta(F+F^*)=\|\pi_0(\mu_{F+F^*})\|.
%\]
%Again, it is easy to check that 
%\[
%	\delta(F+F^*)=\sup_{\|f\|_2=1}\left\|x\mapsto\left(\frac{1}{N}\sum_{k=1}^N\left(f(\gamma_k^{-1}x)+f(\gamma_kx)\right)-\int_Xf(y)d\nu(y)\right)\right\|_2.
%\]

\section{Proofs}

To prove the results stated in the introduction, we start with a general lower bound  on the discrepancy which is sharp in the case of the isometries of the sphere, as well as in the case of the automorphisms of the torus. We then prove the upper bounds in the case of the sphere. Finally we prove the upper bounds in the case of the torus. 
\subsection{A lower bound for the discrepancy}

We present two  proofs of the following lower bound which generalizes and strengthens  \cite[Th\'eor\`eme 3.3]{Sev}, \cite[Theorem 2]{Clo}, \cite[Theorem 4]{Pis}, \cite[Theorem 1.3]{LPS-1}. The first proof is based on a stronger result due to Shalom \cite[Theorem 4.14]{ShaTata} which also appears in \cite[Proposition 14]{DudGri}. The second proof we give is short and self-contained. 

\begin{theo}\label{theo: lower bound on discrepancy} Let $(X,\nu)$ be an atomless probability space. Let $\Gamma$ be a finitely generated group of measure-preserving transformations of $X$. Let $m\in\mathbb C[\Gamma]$ be a positive element (i.e. $m=\sum_{\gamma\in\Gamma} a_{\gamma}\gamma$ with $a_{\gamma}\geq 0$). Then:
	\[
		\|\pi_0(m)\|\geq\|\rho_{\Gamma}(m)\|.
	\]
\end{theo}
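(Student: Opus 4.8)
The plan is to bound $\|\pi_0(m)\|$ from below by evaluating the operator $\pi_0(m^*m)$ on cleverly chosen mean-zero indicator functions, using the positivity of $m$ to discard all but one term of an expansion. First I would reduce to a positive self-adjoint element. Since $\pi_0$ and $\rho_\Gamma$ are $*$-representations, $\|\pi_0(m)\|^2=\|\pi_0(m^*m)\|$ and $\|\rho_\Gamma(m)\|^2=\|\rho_\Gamma(m^*m)\|$, so it suffices to prove $\|\pi_0(b)\|\geq\|\rho_\Gamma(b)\|$ for $b:=m^*m$. Writing $b=\sum_\gamma b_\gamma\gamma$, one has $b=b^*$ with $b_\gamma\geq 0$ (sums and products of nonnegative coefficients), and $\|b^N\|_1=\|b\|_1^N$ because $a\mapsto\sum_\gamma a_\gamma$ is an algebra homomorphism on the positive cone. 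The right-hand side is then read off from the canonical faithful trace $\tau(a)=\langle\rho_\Gamma(a)\delta_e,\delta_e\rangle=a_e$: since $b\geq 0$ and $\delta_e$ is a separating vector, the standard spectral-radius formula gives $\|\rho_\Gamma(b)\|=\lim_{N\to\infty}\tau(b^N)^{1/N}=\lim_{N\to\infty}(b^N)_e^{1/N}$, where $(b^N)_e$ is the coefficient of $e$ in $b^N\in\mathbb C[\Gamma]$.

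For the left-hand side, for each $N$ I would use the atomlessness of $\nu$ to pick a measurable set $A\subseteq X$ with $\nu(A)=t$ small, and set $\xi=\mathbf 1_A-t\,\mathbf 1_X\in L^2_0(X,\nu)$, so that $\|\xi\|_2^2=t(1-t)$. Expanding the inner product and using that each $\pi(\gamma)$ preserves $\nu$ (so $\langle\pi(\gamma)\mathbf 1_A,\mathbf 1_A\rangle=\nu(A\cap\gamma A)$ and $\pi(\gamma)\mathbf 1_X=\mathbf 1_X$) yields the exact identity
\[
\langle\pi(b^N)\xi,\xi\rangle=\sum_{\gamma\in\Gamma}(b^N)_\gamma\,\nu(A\cap\gamma A)-t^2\|b\|_1^N.
\]
Here is the crucial point: every coefficient $(b^N)_\gamma$ and every measure $\nu(A\cap\gamma A)$ is nonnegative, so the sum is at least its $\gamma=e$ term $(b^N)_e\,t$; positivity of $m$ is precisely what removes the oscillating cross terms that would otherwise be uncontrollable. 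Hence $\langle\pi(b^N)\xi,\xi\rangle\geq t\big((b^N)_e-t\|b\|_1^N\big)$.

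Finally I would optimize in $t$. Taking $t=\tfrac12 (b^N)_e\,\|b\|_1^{-N}$ (which is $\leq\tfrac12$ since $(b^N)_e\leq\|b\|_1^N$) makes the bracket at least $\tfrac12(b^N)_e$, so, as $\xi\in L^2_0$ and $b$ is self-adjoint,
\[
\|\pi_0(b)\|^N=\|\pi_0(b^N)\|\geq\frac{\langle\pi(b^N)\xi,\xi\rangle}{\|\xi\|_2^2}\geq\frac{\tfrac12(b^N)_e}{1-t}\geq\tfrac12(b^N)_e.
\]
Taking $N$-th roots and letting $N\to\infty$ gives $\|\pi_0(b)\|\geq\lim_N(b^N)_e^{1/N}=\|\rho_\Gamma(b)\|$, and the first-step reduction finishes the proof.

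The step I expect to be the main obstacle — and the one dictating the whole construction — is controlling the correction term produced by projecting onto $L^2_0$. One cannot simply take a genuinely wandering set $A$ (the action is not assumed free, so the translates $\gamma A$ may be forced to overlap, and $\pi_0$ need not even weakly contain $\rho_\Gamma$ when the action is not faithful); the remedy is to let $t$ shrink with $N$ fast enough, like $\|b\|_1^{-N}$, so that the constant-function contribution $t^2\|b\|_1^N$ is absorbed, while positivity guarantees that the surviving ``loop'' term $(b^N)_e\,t$ still carries the correct exponential growth. This is also why the hypothesis that $m$ be \emph{positive} is essential rather than cosmetic.
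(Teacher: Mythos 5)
Your proof is correct and follows essentially the same route as the paper's self-contained proof: reduce to the positive self-adjoint element $b=m^*m$, test $\pi_0(b^N)$ against a small mean-zero function built from an indicator, use positivity to retain only the coefficient of $e$, and conclude via the spectral-radius/Kesten identity $\lim_{N\to\infty}\left((b^N)_e\right)^{1/N}=\|\rho_{\Gamma}(b)\|$. The only real difference is in the test vector: you center a single indicator, $\xi=\mathbf{1}_A-t\,\mathbf{1}_X$, and absorb the correction term $t^2\|b\|_1^N$ by taking $t$ exponentially small in $N$, whereas the paper takes the difference $1_{B_+}-1_{B_-}$ of two disjoint sets of equal small measure, with $B_-$ chosen outside $FB_+$ (where $F$ is the support of the $N$-th convolution power), so that the cross terms vanish exactly rather than being merely small.
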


\begin{rema} Let $\mathcal{H}$ be a Hilbert space and let $\pi:\Gamma\to U(\mathcal{H})$ be a unitary representation. The unique morphism $\pi:l^1(\Gamma)\to B(\mathcal{H})$ of unital involutive algebras extending $\pi$ is continuous.  (More precisely it satisfies $\|\pi(f)\|\leq\|f\|_1$ for all $f\in l^1(\Gamma)$.) As a consequence, Theorem \ref{theo: lower bound on discrepancy} extends to positive elements in $l^1(\Gamma)$.
\end{rema}

\begin{rema}\label{rem: Pierre-Emmanuel} As mentioned to us by P.-E. Caprace, the existence of measure preserving actions, of locally compact \emph{amenable} groups, with a spectral gap (see for example \cite[Corollary 1.10, 1, Chap. III]{Mar}), prevents an obvious generalization of the above result to locally compact groups. But, we believe the following statement is true. Let $\mu$ be a Haar measure on a locally compact group $G$. Let $\rho_G$ be the right-regular representation of $G$. Let $X$ be a Hausdorff topological space. Let $\nu$ be an atomless Borel regular probability measure on $X$. Suppose that $G$ acts continuously on $X$, and preserves $\nu$. Assume there exists a point $x_0$ in the support of $\nu$, with compact stabilizer, such that the orbit of $x_0$, under  any compact subset $K$ of $G$, has zero measure: $\nu(Kx_0)=0$.  Let $\pi_0:L^1(G,\mu)\to B(L^2_0(X,\nu))$ be the Koopman representation, restricted to the subspace of functions with zero integral. Then, for any continuous function $f:G\to \mathbb R$, with compact support, and taking only nonnegative values, we have:
		\[
			\|\pi_0(f)\|\geq\|\rho_G(f)\|.
		\] 
And if $G$ is is second countable, the inequality extends to nonnegative functions belonging to $L^1(G,\mu)$. (We hope to say more on these questions in a forthcoming paper.) 
\end{rema}	

First proof of Theorem \ref{theo: lower bound on discrepancy}. 
\begin{proof} According to \cite[Theorem 4.14]{ShaTata} or \cite[Proposition 7]{DudGri}, there exists a subgroup $H$ of $\Gamma$ such that the quasi-regular representation of $\Gamma$ on $l^2(\Gamma/H)$ is weakly contained in the restriction of the  Koopman representation of $\Gamma $  to the orthogonal complement $L^2_0(X,\nu)$ of the constant functions. As the quasi-regular representation of $\Gamma$ on $l^2(\Gamma/H)$ contains positive vectors,
the theorem follows from \cite[Lemma 2.3]{ShaAnnals}, and the definition of weak containment \cite[Definition F.1.1]{BdlHV}.
\end{proof}

Second proof of Theorem \ref{theo: lower bound on discrepancy}.

\begin{proof} If $m=0$ the inequality is trivial. If $m\neq 0$, multiplying $m$ by $\|m\|_1^{-1}$, we may assume that $\|m\|_1=1$. 
As $\|T\|^2=\|TT^*\|$ for any bounded operator $T$ on a Hilbert space, we may moreover assume $m=m^*$.

Let $e\in\Gamma$ be the identity element. We claim that for any $n\in\mathbb N$, $\|\pi_0(m^n)\|\geq m^{(n)}(e)$, where $m^{(n)}\in l^1(\Gamma)$ is the $n$-th convolution power of $m$ (although $m$ has finite support it is convenient here and in what follows to view $m$ in the convolution algebra $l^1(\Gamma)$ of summable functions). 

To prove this claim, let $F$ be the support of $m^{(n)}$. We choose  a measurable subset $B_+$ of $X$ such that 
\[
0<\nu(B_+)<\frac{1}{2|F|}.	
\]
Such a set obviously exists because $\nu$ is finite without atom. As the action preserves the measure,
\[
	\nu(FB_+)\leq|F|\nu(B_+)<1/2.
\]
A finite atomless measure has the intermediate value property (see \cite[1.12.10 Corollary]{Bog}) hence there exists a measurable subset $B_-$ of $X\setminus FB_+$
satisfying  $\nu(B_+)=\nu(B_-)$. Let
\[
	\varphi=\frac{1_{B_+}-1_{B_-}}{\|1_{B_+}-1_{B_-}\|_2}\in L_0^2(X,\nu).
\]
The proof of the claim follows then from the Cauchy-Schwarz inequality, the symmetry of $F$, and the positivity of $m$:
\begin{align*}
	\|\pi_0(m^n)\|&\geq\langle\pi_0(m^n)\varphi,\varphi\rangle\\
	              &=\frac{1}{\nu(B_+)+\nu(B_-)}\sum_{\gamma\in \Gamma}[\nu\left(\gamma B_+\cap B_+\right)+\nu(\gamma
				  B_-\cap B_-)]m^{(n)}(\gamma)\\
                  &\geq \frac{1}{\nu(B_+)+\nu(B_-)}\sum_{\gamma=e}[\nu\left(\gamma B_+\cap B_+\right)+\nu(\gamma B_-\cap B_-)]m^{(n)}(\gamma)\\
	              &=m^{(n)}(e).
\end{align*} 
This finishes the proof of the claim.

Applying the claim we obtain:
\begin{align*}
	\|\pi_0(m)\|&=\lim\limits_{n\to\infty}\, \|\pi_0(m)^n\|^{1/n}\\
	&\geq\limsup_{n\to\infty}m^{(n)}(e)^{1/n}\\
	&=\|\rho_{\Gamma}(m)\|.
	\end{align*}
The last equality above goes back to \cite[Lemma 2.2]{Kes}.	 	
\end{proof}

The following  corollary together with Proposition \ref{prop: H-Ch lower bound on the discrepancy for free groups}  illustrate Theorem \ref{theo: lower bound on discrepancy} with two opposite situations: the discrepancy is maximal in the case the transformations generate an amenable group whereas the discrepancy may be small in the case the transformations generate a free group.

\begin{coro}\label{cor: amenable} Let $E\subset G$ be finite and symmetric. Let $\Gamma$ be the group generated by $E$. If $\Gamma$ is amenable then
	\[
		\sup_{\|f\|_2=1}\left\|\frac{1}{|E|}\sum_{\gamma\in E}f(\gamma x)-\int_Xf(y)d\nu(y)\right\|_2=1.
	\]
\end{coro}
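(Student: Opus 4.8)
The plan is to prove this by a two-sided estimate. The upper bound is completely general: since $E = E^{-1}$, the element $\mu_E = \frac{1}{|E|}\ungra_E$ is a symmetric probability measure (in $l^1(\Gamma)$), so $\pi_0(\mu_E)$ is self-adjoint and is an average of unitaries, hence has operator norm at most $1$. In terms of the discrepancy formula \ref{formula: discrepancy}, this simply records that the averaging operator $\frac{1}{|E|}\sum_{\gamma\in E}\pi(\gamma)$ is a contraction and that subtracting the projection $P$ onto constants can only decrease the norm on the invariant-free part. So $\|\pi_0(\mu_E)\| \le 1$ holds for \emph{any} finite symmetric $E$, amenable or not.

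For the matching lower bound I would invoke Theorem \ref{theo: lower bound on discrepancy} with $m = \mu_E$, which is positive and symmetric, giving $\|\pi_0(\mu_E)\| \ge \|\rho_\Gamma(\mu_E)\|$. The corollary then reduces to the assertion that for an amenable group $\Gamma$ generated by the finite symmetric set $E$, the regular-representation norm satisfies $\|\rho_\Gamma(\mu_E)\| = 1$. This is the crux, and it is exactly Kesten's amenability criterion: amenability of $\Gamma$ is equivalent to the spectral radius of the symmetric random walk driven by $\mu_E$ equalling $1$, i.e. $\|\rho_\Gamma(\mu_E)\| = 1$ precisely when $\Gamma$ is amenable (using that $E$ generates $\Gamma$ so the walk is genuinely on all of $\Gamma$). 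Combining $\|\pi_0(\mu_E)\| \ge \|\rho_\Gamma(\mu_E)\| = 1$ with the general upper bound $\|\pi_0(\mu_E)\| \le 1$ forces equality, and rewriting via \ref{formula: discrepancy} gives the stated supremum equal to $1$.

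I expect the main (and really the only) substantive step to be the identity $\|\rho_\Gamma(\mu_E)\| = 1$ for amenable $\Gamma$; the rest is bookkeeping with the definitions. Since $E$ is symmetric, $\mu_E$ is self-adjoint in $l^1(\Gamma)$, so $\rho_\Gamma(\mu_E)$ is a self-adjoint contraction and its norm equals its spectral radius; Kesten's theorem then supplies the value $1$ in the amenable case. One small point to check is that $E$ generates $\Gamma$, so that the support of $\mu_E$ is not contained in a proper subgroup and Kesten's criterion applies directly; this is given in the hypothesis. Thus the corollary is essentially the conjunction of Theorem \ref{theo: lower bound on discrepancy} (lower bound) with the trivial contraction bound (upper bound) and Kesten's characterization of amenability.
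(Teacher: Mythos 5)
Your proposal is correct and follows essentially the same route as the paper's proof: the trivial upper bound $\|\pi_0(\mu_E)\|\leq\|\mu_E\|_1=1$, the lower bound $\|\pi_0(\mu_E)\|\geq\|\rho_{\Gamma}(\mu_E)\|$ from Theorem \ref{theo: lower bound on discrepancy}, and Kesten's characterization of amenability to conclude $\|\rho_{\Gamma}(\mu_E)\|=1$. The only difference is cosmetic: you phrase the upper bound as an average of unitaries rather than via the $l^1$-contractivity of the representation, which is the same estimate.
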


\begin{proof} Without any hypothesis on $\Gamma$ we always have $\|\pi_0(\mu_E)\|\leq \|\mu_E\|_1=1$. According to Theorem \ref{theo: lower bound on discrepancy},
$\|\pi_0(\mu_E)\|\geq\|\rho_{\Gamma}(\mu_E)\|$. According to \cite{KesFull}, the group $\Gamma$ is amenable if and only if $\|\rho_{\Gamma}(\mu_E)\|=1$.	 
\end{proof}

We prove Proposition \ref{prop: H-Ch lower bound on the discrepancy for free groups}.
\begin{proof} Let $n\in\mathbb N$ be given. Let $S_n\subset\Gamma$ be the sphere around $e$ of radius $n$.
As explained in Proposition \ref{prop: computing the norm of the regular representation} from the Appendix, or according to \cite{Coh} or \cite[12.17]{Woe}, 
\[
\|\rho_{\Gamma}(\mu_{S_n})\|=\left(1+\frac{q-1}{q+1}n\right)q^{-n/2}.	
\]
Let $E=S_n$ and let $H<\Gamma$ be the subgroup generated by $E$. Let $\mu_{E}\in\mathbb C[H]\subset\mathbb C[\Gamma]$. Decomposing $\Gamma$ into its right $H$-cosets, it is easy to check that $\rho_{\Gamma}$ restricted to $H$ decomposes as a direct sum of unitary representations, all unitary equivalent to $\rho_{H}$, and that consequently 
	\[
\|\rho_{\Gamma}(\mu_E)\|=\|\rho_{H}(\mu_E)\|.
	\] 
Applying Theorem \ref{theo: lower bound on discrepancy} we obtain
\[
	\|\pi_0(\mu_E)\|\geq\|\rho_{H}(\mu_E)\|.
\]
Applying Formula \ref{formula: discrepancy}	 finishes the proof of the corollary in the case $E=S_n$.
The case of the ball of radius $n$ is similar.
\end{proof}

\subsection{Exact convergence rate for some isometries of the sphere}
We recall the construction from \cite{LPS-2} of free subgroups of isometries of the round sphere.
Let $\mathbb H=\{q=x_0+x_1i+x_2j+x_3k: x_0,x_1,x_2,x_3\in\mathbb R\}$ be the field of quaternions. Let 
$$\tau(q)=\overline{x_0+x_1i+x_2j+x_3k}=x_0-x_1i-x_2j-x_3k$$
denote the conjugate of $q$. Let
$N(q)=q\overline{q}$ be the norm of $q$ and let $|q|=\sqrt{N(q)}$ be its module. The multiplicative group $\mathbb H^*$ acts on $\mathbb H$ by conjugation and  if $q\in\mathbb H^*$ and $v\in\mathbb H$, then $|qvq^{-1}|=|v|$. As this action preserves the subspace 
$\mbox{Im}\,\mathbb H=\{x_1i+x_2j+x_3k: x_1,x_2,x_3\in\mathbb R\}$, it defines a homomorphism
\[
	\mbox{Ad}:\mathbb H^*\to \SO(3,\mathbb R),\, q\mapsto(v\mapsto qvq^{-1})
\]
with values in the orientation preserving isometry group of the round sphere $\sss^2$.
The ring
\[
	\mathbb H(\mathbb Z)=\{q=x_0+x_1i+x_2j+x_3k: x_0,x_1,x_2,x_3\in\mathbb Z\} 
\]
of Lipschitz quaternions has $8$ units:
\[
\mathbb H(\mathbb Z)^{\times}=\{\pm 1,\pm i,\pm j\pm k\}. 	
\]
Let $n\in\mathbb N$. According to Jacobi (see for example \cite[p. 27]{Zag} or \cite[Theorem 2.4.1]{Val} for odd integers), the cardinality of the set of Lipschitz quaternion of norm $n$ is
\[
	|N^{-1}(n)\cap\mathbb H(\mathbb Z)|=8\sum_{4\nmid d\mid n}d.
\]
Hence, if $n=p$ is prime, the set  $N^{-1}(p)\cap\mathbb H(\mathbb Z)$ splits as the disjoint union of the $p+1$ orbits of the action of $\mathbb H(\mathbb Z)^{\times}$.
In the case $p\equiv1\mod 4$, it is easy to check that each orbit contains a unique quaternion $q=x_0+x_1i+x_2j+x_3k$ with $x_0>0$ and $x_0\equiv1\mod 2$ and that
the set 
\[
\{q=x_0+x_1i+x_2j+x_3k: x_0,x_1,x_2,x_3\in\mathbb Z,\, N(q)=p,\, x_0>0,\, x_0\equiv1\mod 2\}
\]
splits into $\frac{p+1}{2}$ orbits of the involution $\tau$, each containing two elements. Let $\Sigma_{p+1}\subset \SO(3,\mathbb R)$ denote the image of this set under the homomorphism $\mbox{Ad}$.

We are ready to prove Theorem \ref{thm: main}.
\begin{proof} It follows from \cite[S153-S158]{LPS-1} and \cite[Theorem 4.1]{LPS-2}, that the spectrum of $\pi_0$
 satisfies	
\begin{align}\label{formula: inclusion between spectra}
 \sigma\left(\pi_0\left(\ungra_{\Sigma_{p+1}}\right)\right)\subset[-2 \sqrt p,2 \sqrt p].
\end{align}
(In fact it is shown in \cite[S153-S158]{LPS-1} that $\sigma\left(\pi_0\left(\ungra_{\Sigma_{p+1}}\right)\right)=[-2 \sqrt p,2 \sqrt p]$. We  will ``only'' use the inclusion $\sigma(\pi_0(\ungra_{\Sigma_{p+1}}))\subset[-2 \sqrt p,2 \sqrt p]$ but this is by far the hardest to prove; the main ingredient in its proof is the inequality \cite[Theorem 4.1]{LPS-2} which relies in particular on \cite{Del}.)
Applying Inclusion \ref{formula: inclusion between spectra} and Theorem \ref{theo: lower bound on discrepancy} we deduce the inequalities
\[2 \sqrt p\geq\left\|\pi_0\left(\ungra_{\Sigma_{p+1}}\right)\right\|\geq\left\|\rho_{\Gamma}\left(\ungra_{\Sigma_{p+1}}\right)\right\|.
\]
It then follows from Kesten's spectral characterization of free groups (see \cite[S157]{LPS-1} and \cite{Kes})  that $\Gamma$ is free of rank $r=\frac{p+1}{2}$ (and freely generated by any subset $A$ of $\Sigma_{p+1}$ containing $\frac{p+1}{2}$ elements and satisfying $A\cap A^{-1}=\emptyset$).
On $\Gamma$ we consider the word metric defined by $\Sigma_{p+1}$, and for each $n\in\mathbb N\cup\{0\}$, the Hecke element
\[
	T_n=\sum_{|\gamma|=n}\gamma\in\mathbb C[\Gamma].
\]
We have: $T_0=e$, $$T_1=\sum_{\gamma\in \Sigma_{p+1}}\gamma = \ungra_{\Sigma_{p+1}},$$ $$T_1T_1=T_2+2rT_0.$$ If $n\geq 2$ we have: 
\[
T_nT_1=T_{n+1}+pT_{n-1}.	
\]
There is a unique morphism of unital rings, from the ring $\mathbb Z[X]$ of polynomials in one variable  with integer coefficients, to $\mathbb C[\Gamma]$,
sending $X$ to $T_1$. The above recursion relations show that $T_n$ is in the image of this morphism for any $n\geq 0$. In other words for each $n\geq 0$, there exists $P_n\in \mathbb Z[X]$ such that $T_n=P_n(T_1)$. 

We first prove the theorem in the case of a sphere $S_n\subset\Gamma$. The lower bound on the discrepancy follows from Proposition \ref{prop: H-Ch lower bound on the discrepancy for free groups}. For the upper bound, applying the spectral theorem for bounded self-adjoint operators, Inclusion \ref{formula: inclusion between spectra}, and Kesten's computation
\cite{Kes} of the spectrum of the regular representation
\[
	\sigma\left(\rho_{\Gamma}(T_1)\right)=[-2 \sqrt p,2 \sqrt p],
\]
we deduce that
\begin{align*}\label{formula: spectral theorem}
	\|\pi_0(T_n)\|&=\|\pi_0(P_n(T_1))\|\\
	              &=\|P_n(\pi_0(T_1))\|\\
                  &=\sup_{\lambda\in\sigma(\pi_0(T_1))}|P_n(\lambda)|\\
				  &\leq\sup_{\lambda\in[-2 \sqrt p,2 \sqrt p]}|P_n(\lambda)|\\
	              &=\sup_{\lambda\in\sigma(\rho_{\Gamma}(T_1))}|P_n(\lambda)|\\
				  &=\|\rho_{\Gamma}(T_n)\|.
\end{align*}
Applying Proposition \ref{prop: computing the norm of the regular representation} or \cite[12.17]{Woe}, we conclude that
$$\sup_{\|f\|_2=1}\left\|\frac{1}{|S_n|}\sum_{\gamma\in S_n}f(\gamma x)-\int_{\sss^2}f(y)d\nu(y)\right\|_2\leq \left(1+\frac{p-1}{p+1}n\right)p^{-n/2}.$$
The proof, in the case of a ball $B_n$, follows exactly the same lines.
\end{proof}

\subsection{Exact convergence rate for automorphisms of the torus}
We prove Theorem \ref{thm: exact convegence rate on the torus}.
\begin{proof} According to Proposition \ref{prop: H-Ch lower bound on the discrepancy for free groups}, the lower bounds on the discrepancies are true and the cases with $q=1$ have already been discussed in Corollary \ref{cor: amenable} and Remark \ref{rema: amenable}. As explained in \cite[20]{dlH} or in \cite[Theorem 4.17]{ShaTata}, the restriction of the Koopman representation defined by the action of $G$ on $\ttt^2$
	\[
		\pi_0:G\to U(L_0^2(\ttt^2,\nu))
	\]
is weakly contained in the regular representation $\rho_G$. Hence, according to Theorem \cite[Theorem 7]{dlH}, for any $m\in\mathbb C[G]$, 
\[
	\|\pi_0(m)\|\leq \|\rho_G(m)\|.
\]	
Choosing $m=\mu_{E_n}\in\mathbb C[\Gamma]\subset\mathbb C[G]$, where $E_n$ is either a sphere or a ball, we get (as explained in the proof of Proposition \ref{prop: H-Ch lower bound on the discrepancy for free groups}):
\[
\|\rho_G(\mu_{E_n})\|=\|\rho_{\Gamma}(\mu_{E_n})\|.	
\]
Applying Proposition \ref{prop: computing the norm of the regular representation} or \cite[12.17]{Woe} finishes the proof of the theorem.
\end{proof}

\section{Appendix}
The aim of this appendix is to recall  well-known facts about the regular and quasi-regular representations of the automorphism group of a regular tree.
 Most relevant for this paper are explicit formulae for the norms of Markov operators, defined by the regular representation of the automorphism group of the tree.
Although the formulae from Proposition \ref{prop: computing the norm of the regular representation} below follow from \cite{Coh}, or \cite[12.17]{Woe}  which is based on \cite[12.10]{Woe} (see also \cite{SalWoe} for a more general setting), it seems  worthwhile to emphasize that these formulae can also be deduced and expressed with the help of the Harish-Chandra function of the quasi-regular representation of the automorphism group of an homogeneous tree. 
(Both approaches are equivalent; the modular functions of cocompact amenable subgroups in \cite{SalWoe} correspond to Radon-Nikodym cocycles of quasi-regular representations.)

\subsection{The boundary of a tree and Busemann cocycles}
Let $(X,d)$ be the regular tree of degree $q+1$ equipped with its geodesic path metric $d$ for which each edge is isometric to the unit interval $[0,1]\subset\mathbb R$. Let $x_0$ be a vertex of $X$. Let $\partial X$ be its boundary at infinity (we refer the reader to \cite{Bou} for more details).
 Let $b\in \partial X$ and let $\beta:[0,\infty)\to X$ be a geodesic ray representing $b$. Let $x, y\in X$. Let
\[
	B_b(x,y)=\lim_{t\to\infty}[d(x,\beta(t))-d(y,\beta(t))],
\]
be the Busemann cocycle defined by $b\in\partial X$.
Let $a, b\in\partial X$ and let $\alpha,\beta:[0,\infty)\to X$ be geodesic rays representing $a$ and $b$. Their Gromov product relative to the base point $x_0$ is defined as
\[
	(a|b)_{x_0}=\frac{1}{2}\lim_{t\to\infty}[d(x_0,\alpha(t))+d(x_0,\beta(t))-d(\alpha(t),\beta(t))].
\]
The formula
\[
	d_{x_0}(a,b)=e^{-(a|b)_{x_0}}
\]
defines an ultra-metric on $\partial X$. 
\subsection{Conformal transformations and Radon-Nikodym derivatives}
The group $\mbox{Aut}(X)$ of isometries of $X$ acts on $\partial X$ by conformal transformations.
The Hausdorff dimension of $(\partial X,d_{x_0})$ equals $\log q$ and the normalized Hausdorff measure $\nu$ on $(\partial X,d_{x_0})$ is the unique Borel
probability measure on $\partial X$ invariant under the action of the stabilizer $K=\mbox{Aut}(X)_{x_0}$ of $x_0$. The Radon-Nikodym derivative of $g\in \mbox{Aut}(X)$
at $b\in\partial X$ is
\[
	\frac{dg_*\nu}{d\nu}(b)=q^{B_b(x_0,gx_0)}.
\]
\subsection{The Koopman representation and the Harish-Chandra function}
The Koopman representation 
\[
	\lambda_{\nu}:\mbox{Aut}(X)\to U(L^2(\partial X,\nu))
\]
is defined as
\[
	(\lambda_{\nu}(g)f)(b)=f(g^{-1}b)\sqrt{\frac{dg_*\nu}{d\nu}(b)}.
\]
(The representation $\lambda_{\nu}$ is unitary equivalent to the quasi-regular representation $\lambda_{G/P}$, where $G=\mbox{Aut}(X)$ and $P=G_b$, with $b\in\partial X$ any base point at infinity.)

Let $1_{\partial X}\in L^2(\partial X,\nu)$ be the constant function equal to $1$. 
The Harish-Chandra function 
\[
	\Xi:\mbox{Aut}(X)\to(0,\infty)
\]
is the coefficient of $\lambda_{\nu}$ defined by $1_{\partial X}$ that is:
\[
\Xi(g)=\langle \lambda_{\nu}(g)1_{\partial X},1_{\partial X}\rangle.
\]
As the action of $K$ preserves the measure and as $\lambda_{\nu}$ is unitary, the Harish-Chandra function is $K$-bi-invariant and symmetric. (In \cite[Part II, 16]{HarCha} Harish-Chandra introduces the function $\Xi$ on a connected reductive Lie group. The function $\Xi$ can be viewed as the spherical function  associated to a quasi-regular representation. The definitions make sense for $G$ a locally compact (separable) unimodular group with a compact subgroup $K$ such that $(G,K)$ is a Gelfand pair, and an irreducible unitary representation $\pi$ with one-dimensional $K$-invariant subspace \cite[1.5]{GanVar}. In our case $G=\mbox{Aut}(X)$, $K=\mbox{Aut}(X)_{x_0}$ and $\pi=\lambda_{\nu}$.)
   
\subsection{A formula for the Harish-Chandra function}
The length function
\[
	L:\mbox{Aut}(X)\to\mathbb N\cup\{0\}
\]
\[
	L(g)=d(x_0,gx_0)
\]
is also $K$-bi-invariant and symmetric (notice that the elements of length $0$ are the elements of $K$). As $K$ acts transitively on each sphere of $X$ with center $x_0$, if $g,g'\in \mbox{Aut}(X)$ satisfy $L(g)=L(g')$ then there exist $k,k'\in K$ such that $kg=g'k'$. This implies that $\Xi$ is constant on the level sets of $L$. For each $n\in\mathbb N\cup\{0\}$,
we will write $\Xi(n)$ for the common value of the Harish-Chandra function  on all $g\in \mbox{Aut}(X)$ such that $L(g)=n$. 
We claim that for any $n\in\mathbb N\cup\{0\}$,
\begin{align}\label{formula: Harish-Chandra}
\Xi(n)=\left(1+\frac{q-1}{q+1}n\right)q^{-n/2}.
\end{align}
If $L(g)=0$, that is if $g\in K$, then $\Xi(g)=1$. If $L(g)>0$, let
$[x_0,gx_0]$ be the (image of the) unique geodesic segment of $X$ between $x_0$ and $gx_0$. For each vertex $x$ of $X$ at distance exactly $1$ from $[x_0,gx_0]$, consider the ball
$U_x$ of $\partial X$ of radius $e^{-d(x_0,x)}$ consisting of the points at infinity of the geodesic rays of $X$ starting from $x_0$ and passing through $x$. We obtain the partition
\[
	\partial X=\bigcup_{d(x,[x_0,gx_0])=1}U_x.
\]
As for each $r\in\mathbb N$ the measure of a ball of radius $e^{-r}$ equals $[(q+1)q^{r-1}]^{-1}$, it is easy to check that
\begin{align*}
\Xi(g)&=\int_{\partial X}q^{\frac{1}{2}B_b(x_0,gx_0)}d\nu(b)\\
      &=\sum_{d(x,[x_0,gx_0])=1}\int_{U_x}q^{\frac{1}{2}B_b(x_0,gx_0)}d\nu(b)\\
	  &=\sum_{d(x,[x_0,gx_0])=1}\int_{U_x}q^{\frac{1}{2}(d(x_0,x)-d(gx_0,x))}d\nu(b)\\
	  &=\sum_{d(x,[x_0,gx_0])=1}q^{\frac{1}{2}(d(x_0,x)-d(gx_0,x))}\nu(U_x)\\
      &=\left(1+\frac{q-1}{q+1}n\right)q^{-n/2}.
\end{align*}
\subsection{Computing operator norms with the Harish-Chandra function}
We first compute the norms of some operators defined by the Koopman representation. We then explain how  spectral transfer applies to compute the norms of the corresponding operators defined by the regular representation.
\begin{prop}\label{prop: computing the norm with the Harish-Chandra function} Let $r\in\mathbb N$. Let $\Gamma$ be the free group of rank $r$. Let $a_1,\cdots, a_r$ be a free generating set of $\Gamma$. Let $X$ be the Cayley graph of $\Gamma$ with respect to $\{a_1^{\pm 1},\cdots, a_r^{\pm 1}\}$. Let $e=x_0\in X$ be a base point. Let $G=\mbox{Aut}(X)$ and let $L(g)=d(x_0,gx_0)$ be the length function on $G$ defined by $x_0$. For each integer $n\geq 0$, let $S_n=L^{-1}(n)\cap\Gamma$ and let $B_n=L^{-1}([0,n])\cap\Gamma$ (where $\Gamma\subset G$ is the natural embedding). Then
	\[
		\|\lambda_{\nu}(\mu_{S_n})\|=\Xi(n), 
	\]
	\[
		\|\lambda_{\nu}(\mu_{B_n})\|=\frac{1}{|B_n|}\sum_{k=0}^n\Xi(k)|S_k|.
	\]
\end{prop}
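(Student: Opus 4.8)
The plan is to exploit the fact that the operators $\lambda_{\nu}(\mu_{S_n})$ and $\lambda_{\nu}(\mu_{B_n})$ are averages over $K$-bi-invariant level sets of the length function $L$, so that their norms can be read off from a single distinguished positive vector, the constant function $1_{\partial X}$. The key observation is that $\mu_{S_n}$ (and likewise $\mu_{B_n}$) is a nonnegative, symmetric element of $\mathbb{C}[\Gamma]$, hence $\lambda_{\nu}(\mu_{S_n})$ is a self-adjoint operator whose norm equals its spectral radius, and moreover its numerical range is governed by its diagonal matrix coefficient against $1_{\partial X}$.

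First I would compute the matrix coefficient $\langle \lambda_{\nu}(\mu_{S_n})1_{\partial X}, 1_{\partial X}\rangle$. Since $S_n = L^{-1}(n)\cap\Gamma$ and the Harish-Chandra function $\Xi$ is constant equal to $\Xi(n)$ on this level set, linearity of $\lambda_\nu$ as an algebra morphism gives immediately
\[
\langle \lambda_{\nu}(\mu_{S_n})1_{\partial X}, 1_{\partial X}\rangle = \frac{1}{|S_n|}\sum_{\gamma\in S_n}\Xi(\gamma) = \Xi(n),
\]
and the analogous computation for the ball yields
\[
\langle \lambda_{\nu}(\mu_{B_n})1_{\partial X}, 1_{\partial X}\rangle = \frac{1}{|B_n|}\sum_{k=0}^n \Xi(k)|S_k|,
\]
by partitioning $B_n$ into its spheres $S_0,\dots,S_n$. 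This already furnishes the \emph{lower} bound $\|\lambda_{\nu}(\mu_{S_n})\|\geq\Xi(n)$, since $1_{\partial X}$ is a unit vector and the operator is self-adjoint.

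The main obstacle is the matching \emph{upper} bound, that is, proving that the constant function $1_{\partial X}$ actually \emph{realizes} the operator norm rather than merely bounding it below. The clean way to do this is to appeal to the theory of spherical functions for the Gelfand pair $(G,K)$ with $G=\mathrm{Aut}(X)$ and $K=\mathrm{Aut}(X)_{x_0}$, already set up in the preceding subsections: the $K$-bi-invariant elements act on the cyclic subspace generated by $1_{\partial X}$, the spherical Plancherel/Fourier transform diagonalizes this action, and $\Xi$ is precisely the spherical function of the representation $\lambda_\nu$. Concretely, I would argue that averaging $\mu_{S_n}$ over $K$ on both sides leaves it unchanged (by $K$-bi-invariance), which lets one restrict attention to the $K$-fixed vectors in each irreducible spherical component; on such a component the operator acts by the scalar obtained by evaluating against the relevant spherical function, and among all spherical functions the value $\Xi(n)$ attached to $1_{\partial X}$ is extremal. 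Equivalently, one invokes the identification of $\lambda_\nu$ with the quasi-regular representation and the fact that for such representations the norm of a positive $K$-bi-invariant convolver equals its value on the spherical vector, exactly as in \cite[12.17]{Woe}. The formula for the ball then follows from the sphere case together with the partition identity above, completing the proof.
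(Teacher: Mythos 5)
Your lower bound is exactly the paper's: Cauchy--Schwarz against the unit vector $1_{\partial X}$, together with the fact that $\Xi$ is constant on spheres, gives $\|\lambda_{\nu}(\mu_{S_n})\|\geq\langle\lambda_{\nu}(\mu_{S_n})1_{\partial X},1_{\partial X}\rangle=\Xi(n)$, and similarly for balls. The upper bound, however, contains a genuine gap, located precisely where you yourself place the main difficulty. Your reduction to spherical components rests on the assertion that ``averaging $\mu_{S_n}$ over $K$ on both sides leaves it unchanged (by $K$-bi-invariance).'' This is false: $\mu_{S_n}$ is a finitely supported element of $\mathbb{C}[\Gamma]$, carried by the \emph{discrete} subgroup $\Gamma\subset G$, and $K$-bi-averaging smears it into the (continuous) normalized measure on the double coset $L^{-1}(n)=K\gamma K$. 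Equivalently, for $k\in K$ and $\gamma\in\Gamma$ the automorphism $k\gamma k^{-1}$ agrees with an element of $\Gamma$ at the vertex $x_0$ but is in general not in $\Gamma$, so $\lambda_{\nu}(\mu_{S_n})$ does not commute with $\lambda_{\nu}(K)$, does not preserve a decomposition of $L^2(\partial X,\nu)$ into irreducible spherical components, and does not act on such components by scalars; the Hecke-algebra/spherical-Plancherel machinery of the Gelfand pair $(G,K)$ simply does not apply to this operator. What survives of the $K$-symmetry is much weaker: since $\lambda_{\nu}(\gamma)1_{\partial X}(b)=q^{\frac{1}{2}B_b(x_0,\gamma x_0)}$ depends only on the point $\gamma x_0$, and the point-set $S_n x_0$ is a sphere of the tree, hence $K$-invariant, the single vector $1_{\partial X}$ is an eigenvector: $\lambda_{\nu}(\mu_{S_n})1_{\partial X}=\Xi(n)1_{\partial X}$. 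But one eigenvector only reproduces the lower bound; it does not control $\|\lambda_{\nu}(\mu_{S_n})\|_{2\to 2}$.

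The missing ingredient, which is the paper's actual argument, is positivity plus interpolation: $\lambda_{\nu}(\mu_{S_n})$ preserves nonnegative functions and is self-adjoint on $L^2$, so Riesz--Thorin (combined with the duality between the $(1\to 1)$- and $(\infty\to\infty)$-norms) yields $\|\lambda_{\nu}(\mu_{S_n})\|_{2\to 2}\leq\|\lambda_{\nu}(\mu_{S_n})\|_{\infty\to\infty}=\|\lambda_{\nu}(\mu_{S_n})1_{\partial X}\|_{\infty}=\Xi(n)$, where the last equality is exactly the eigenvector computation above (the paper proves it by showing $\lambda_{\nu}(\mu_{S_n})1_{\partial X}$ is $K$-invariant, hence constant, with integral $\Xi(n)$); the ball case is identical. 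Your fallback, quoting \cite[12.17]{Woe}, does not close the gap either: that reference computes norms of radial elements in the \emph{regular} representation $\rho_{\Gamma}$, not in $\lambda_{\nu}$, and passing between $\|\rho_{\Gamma}(\mu_{S_n})\|$ and $\|\lambda_{\nu}(\mu_{S_n})\|$ requires amenability of the $\Gamma$-action on $\partial X$ (for one inequality) and a positive-vector weak-containment argument (for the other) --- this transfer is precisely how the paper deduces Proposition \ref{prop: computing the norm of the regular representation} \emph{from} the present proposition, and you invoke neither ingredient. Even granting a spherical reduction, your closing claim that $\Xi(n)$ is ``extremal'' among the relevant spherical values is itself the nontrivial content of the radial Plancherel theory and would need proof.
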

\begin{proof} We first consider the case of the sphere $S_n$. Let $1_{\partial X}$ be the constant function equal to $1$ on $\partial X$. Applying the Cauchy-Schwarz inequality and the fact that the function $\Xi$ is constant on $S_n$, we obtain:
\begin{align*}
\|\lambda_{\nu}(\mu_{S_n})\|&\geq \|\lambda_{\nu}(\mu_{S_n})1_{\partial X}\|_2=\|\lambda_{\nu}(\mu_{S_n})1_{\partial X}\|_2\|1_{\partial X}\|_2\\
                            &\geq\langle\lambda_{\nu}(\mu_{S_n})1_{\partial X},1_{\partial X}\rangle=
							\frac{1}{|S_n|}\sum_{\gamma\in S_n}\langle\lambda_{\nu}(\gamma)1_{\partial X},1_{\partial X}\rangle\\
							&=\Xi(n).
\end{align*}

To prove the other inequality, we first notice that for $p\in\{1,2,\infty\}$, the operator $\lambda_{\nu}(\mu_{S_n}):L^p(X,\nu)\to L^p(X,\nu)$ is bounded. In the case $p=2$ it is self-adjoint.  Thanks to  Riesz-Thorin's theorem,
\[
\|\lambda_{\nu}(\mu_{S_n})\|_{2\to 2}\leq \|\lambda_{\nu}(\mu_{S_n})\|_{\infty\to \infty}.
\]
As $\lambda_{\nu}(\mu_{S_n})$ preserves positive functions, it is obvious that $$\|\lambda_{\nu}(\mu_{S_n})\|_{\infty\to \infty}=\|\lambda_{\nu}(\mu_{S_n})1_{\partial X}\|_{\infty}.$$
We claim that the function $\lambda_{\nu}(\mu_{S_n})1_{\partial X}$ is constant equal to $\Xi(n)$. To prove the claim we first show that the function is $K$-invariant (hence constant as $K$ acts transitively on $\partial X$). Let $b\in\partial X$ and $k\in K$. We have:
\begin{align*}
\lambda_{\nu}(\mu_{S_n})1_{\partial X}(kb)&=\frac{1}{|S_n|}\sum_{\gamma\in S_n}q^{\frac{1}{2}B_{kb}(x_0,\gamma x_0)}\\
                                          &=\frac{1}{|S_n|}\sum_{\gamma\in S_n}q^{\frac{1}{2}B_{b}(k^{-1}x_0,k^{-1}\gamma x_0)}\\
                                          &=\frac{1}{|S_n|}\sum_{\gamma\in S_n}q^{\frac{1}{2}B_{b}(x_0,k^{-1}\gamma x_0)}\\
                                          &=\frac{1}{|S_n|}\sum_{\gamma\in S_n}q^{\frac{1}{2}B_{b}(x_0,\gamma x_0)}\\
										  &=\lambda_{\nu}(\mu_{S_n})1_{\partial X}(b).
\end{align*}									  
The claim is proved because $\nu$ is a probability measure and by definition of $\Xi$ we have:
\[
	\int_{\partial X}\lambda_{\nu}(\mu_{S_n})1_{\partial X}(b)d\nu(b)=\Xi(n).
\]
									  
In the he case of the ball $B_n$, applying Cauchy-Schwarz's inequality and Riesz-Thorin's theorem in similar ways proves  that
	\[
		\left\|\lambda_{\nu}\left(\ungra_{B_n}\right)\right\|=\sum_{\gamma\in B_n}\Xi(\gamma).
	\]
The result follows because $\Xi$ is constant on spheres.									  
\end{proof}

\begin{prop}\label{prop: computing the norm of the regular representation} Let $r\in\mathbb N$. Let $\Gamma$ be the free group of rank $r$. Let $a_1,\cdots, a_r$ be a free generating set of $\Gamma$. Let $S=\{a_1^{\pm 1},\cdots, a_r^{\pm 1}\}$. For each integer $n\geq 0$, let $S_n$, respectively $B_n$, be the sphere, respectively the ball, around $e\in\Gamma$ of radius $n$ with respect to the word metric on $\Gamma$ defined by $S$. Let $q=2r-1$. Then
	\[
		\|\rho_{\Gamma}(\mu_{S_n})\|=\left(1+\frac{q-1}{q+1}n\right)q^{-n/2}, 
	\]
\[
		\|\rho_{\Gamma}(\mu_{B_n})\|=c(q,n)\left(1+\left(1+\frac{1}{\sqrt q}\right)n\right)q^{-n/2},
\]
where $c(q,n)=\left(1+2q^{-n}\sum_{k=0}^{n-1}q^k\right)^{-1}$.
\end{prop}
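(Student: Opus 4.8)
The plan is to derive both formulae from the quasi-regular computation already carried out in Proposition \ref{prop: computing the norm with the Harish-Chandra function}, by transferring operator norms from the boundary representation $\lambda_{\nu}$ to the regular representation $\rho_{\Gamma}$, and then to evaluate the resulting expressions in closed form. In other words, I would reduce the statement to the identities $\|\rho_{\Gamma}(\mu_{S_n})\|=\|\lambda_{\nu}(\mu_{S_n})\|$ and $\|\rho_{\Gamma}(\mu_{B_n})\|=\|\lambda_{\nu}(\mu_{B_n})\|$, together with an elementary summation.

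First I would record the algebraic structure of the radial (Hecke) elements $T_n=\ungra_{S_n}=\sum_{|\gamma|=n}\gamma\in\mathbb C[\Gamma]$. The tree recursions $T_1T_1=T_2+(q+1)T_0$ and $T_nT_1=T_{n+1}+qT_{n-1}$ for $n\geq2$ show that $T_n=P_n(T_1)$ for a monic polynomial $P_n$ of degree $n$ with real coefficients, so the commutative algebra spanned by the $T_n$ is the polynomial algebra generated by the single self-adjoint element $T_1$. Consequently $\mu_{S_n}=|S_n|^{-1}P_n(T_1)$ and $\mu_{B_n}=|B_n|^{-1}\sum_{k=0}^nP_k(T_1)$ are real polynomials in $T_1$ under either representation.

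The core step is the spectral transfer. Since $S_n$ and $B_n$ are symmetric, $\rho_{\Gamma}(T_1)$ and $\lambda_{\nu}(T_1)$ are bounded self-adjoint operators, and for any real polynomial $R$ the spectral theorem gives $\|\rho_{\Gamma}(R(T_1))\|=\max_{\lambda\in\sigma(\rho_{\Gamma}(T_1))}|R(\lambda)|$ and likewise for $\lambda_{\nu}$. It therefore suffices to prove the coincidence of the two spectra $\sigma(\rho_{\Gamma}(T_1))=\sigma(\lambda_{\nu}(T_1))=[-2\sqrt q,2\sqrt q]$, equivalently that $\rho_{\Gamma}$ and the restriction of $\lambda_{\nu}$ are weakly equivalent on the radial subalgebra. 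For $\rho_{\Gamma}(T_1)$, which is conjugate to the adjacency operator of the $(q+1)$-regular tree, this is Kesten's computation; for $\lambda_{\nu}(T_1)$ it is the standard description of the tempered (principal series) spectrum of $\mathrm{Aut}(X)$ on $L^2(\partial X,\nu)$, consistent with the value $\|\lambda_{\nu}(T_1)\|=|S_1|\,\Xi(1)=2\sqrt q$ read off from Formula \ref{formula: Harish-Chandra}. I expect this identification of the two $T_1$-spectra to be the main obstacle; once it is in place, the norms agree and Proposition \ref{prop: computing the norm with the Harish-Chandra function} replaces the right-hand sides by $\Xi(n)$ and $|B_n|^{-1}\sum_{k=0}^n\Xi(k)|S_k|$.

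Finally I would evaluate these quantities. The sphere formula is then immediate from Formula \ref{formula: Harish-Chandra}. For the ball I would substitute $|S_0|=1$, $|S_k|=(q+1)q^{k-1}$ for $k\geq1$, $|B_n|=\frac{(q+1)q^n-2}{q-1}$, and $\Xi(k)=\left(1+\frac{q-1}{q+1}k\right)q^{-k/2}$, reducing $\sum_{k=0}^n\Xi(k)|S_k|$ to the geometric and arithmetico-geometric sums $\sum_k q^{k/2}$ and $\sum_k k\,q^{k/2}$. A routine summation gives $\sum_{k=0}^n\Xi(k)|S_k|=\left(1+(1+q^{-1/2})n\right)q^{n/2}$; since one checks directly that $c(q,n)=q^n/|B_n|$, dividing by $|B_n|$ yields exactly $c(q,n)\left(1+(1+\frac{1}{\sqrt q})n\right)q^{-n/2}$, as claimed. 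These last manipulations are elementary and I would not belabor them.
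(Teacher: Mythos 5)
Your reduction and your closing computations are correct (the identities $c(q,n)=q^n/|B_n|$ and $\sum_{k=0}^n\Xi(k)|S_k|=\bigl(1+(1+q^{-1/2})n\bigr)q^{n/2}$ do check out, and they match the paper's own calculation), and your overall plan---transfer the norms from $\lambda_{\nu}$ to $\rho_{\Gamma}$, then quote Proposition \ref{prop: computing the norm with the Harish-Chandra function} and Formula \ref{formula: Harish-Chandra}---is the same as the paper's. The gap is in the transfer itself, at exactly the point you flag as ``the main obstacle.'' Under the spectral theorem the two spectral inclusions play asymmetric roles. The inclusion $\sigma(\lambda_{\nu}(T_1))\subseteq[-2\sqrt q,2\sqrt q]$ is indeed cheap: it follows from self-adjointness and $\|\lambda_{\nu}(T_1)\|=(q+1)\Xi(1)=2\sqrt q$ (Proposition \ref{prop: computing the norm with the Harish-Chandra function} with $n=1$), and together with Kesten it yields the lower bound $\|\rho_{\Gamma}(\mu_{S_n})\|\geq\Xi(n)$; this is even slightly more economical than the paper, which obtains that direction from amenability of the boundary action (\cite{Kuh} and \cite[Theorem 7]{dlH}). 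But the reverse inclusion $[-2\sqrt q,2\sqrt q]\subseteq\sigma(\lambda_{\nu}(T_1))$, which is what produces the upper bound $\|\rho_{\Gamma}(\mu_{S_n})\|\leq\Xi(n)$, is established by nothing you write: the norm identity $\|\lambda_{\nu}(T_1)\|=2\sqrt q$ only places the spectrum inside the interval and forces an endpoint to belong to it. If $\sigma(\lambda_{\nu}(T_1))$ were, say, a finite subset of the interval, then $\sup_{\sigma(\lambda_{\nu}(T_1))}|P_n|$ could be strictly smaller than $\sup_{[-2\sqrt q,2\sqrt q]}|P_n|$, and your argument would give no upper bound at all.

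Moreover, the ``standard fact'' you invoke to close this hole is about the wrong group. The principal-series/tempered description of $L^2(\partial X,\nu)$ concerns the representation theory of the locally compact group $\mathrm{Aut}(X)$ (spherical analysis for the Gelfand pair it forms with $K$), whereas $T_1\in\mathbb C[\Gamma]$, so the spectrum of $\lambda_{\nu}(T_1)$ is governed by the restriction $\lambda_{\nu}|_{\Gamma}$ to the discrete free group; knowing how $\mathrm{Aut}(X)$ decomposes $L^2(\partial X,\nu)$ does not by itself determine that spectrum. What you need is precisely that $\lambda_{\nu}|_{\Gamma}$ weakly contains $\rho_{\Gamma}$ (at least on positive radial elements)---and that is the substantive content of the proposition, not a citation-level remark. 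The paper closes exactly this gap with \cite[Lemma 2.3]{ShaAnnals}: every operator $\lambda_{\nu}(\gamma)$ preserves the cone of nonnegative functions (the Radon--Nikodym factor is positive) and $1_{\partial X}$ is a positive vector, whence $\|\rho_{\Gamma}(m)\|\leq\|\lambda_{\nu}(m)\|$ for every positive $m\in\mathbb C[\Gamma]$; applied to $m=\mu_{S_n}$ and $m=\mu_{B_n}$ this is the missing inequality. To repair your proof, either insert this positivity argument, or replace the appeal to $\mathrm{Aut}(X)$-theory by a genuine reference (with proof of applicability) for the weak equivalence of the boundary representation of a free group with its regular representation. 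As written, the upper-bound half of both displayed formulas is unproven.
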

\begin{proof} We claim that for any positive element $m\in\mathbb C[\Gamma]$, we have 
\[
	\|\rho_{\Gamma}(m)\|=\|\lambda_{\nu}(m)\|.
\]
The inequality $\|\rho_{\Gamma}(m)\|\leq \|\lambda_{\nu}(m)\|$ follows from \cite[Lemma 2.3]{ShaAnnals} because $1_{\partial X}$ is a positive vector for $\lambda_{\nu}$. The inequality
$\|\rho_{\Gamma}(m)\|\geq \|\lambda_{\nu}(m)\|$ is true for any element  $m\in\mathbb C[\Gamma]$ (not only positive ones), because the action of $\Gamma$ on $\partial X$ is amenable, see \cite{Kuh} and \cite[Theorem 7]{dlH}. To prove the proposition, we apply Proposition \ref{prop: computing the norm with the Harish-Chandra function} and Formula \ref{formula: Harish-Chandra}. In the case of the sphere $S_n$, this immediately proves the statement. The case of the ball $B_n$ requires some computation. If $q=1$, the formula is obvious.  If $q>1$, we have
\begin{align*}
\|\rho_{\Gamma}(\mu_{B_n})\|&=\frac{1}{|B_n|}\sum_{k=0}^n\Xi(k)|S_k|\\
                            &=\frac{1}{|B_n|}\left(1+\frac{q+q^{1/2}}{q}n\right)q^{n/2}\\
							&=\left(\frac{q+1}{q-1}q^n-\frac{2}{q-1}\right)^{-1}\left(1+\frac{q+q^{1/2}}{q}n\right)q^{n/2}\\
							&=c(q,n)\left(1+\left(1+\frac{1}{\sqrt q}\right)n\right)q^{-n/2},
\end{align*}	
where $c(q,n)=\left(1+2q^{-n}\sum_{k=0}^{n-1}q^k\right)^{-1}$.	
\end{proof}

\begin{bibdiv}
\begin{biblist}

\bib{ArnKry}{article}{
   author={Arnol'd, V. I.},
   author={Krylov, A. L.},
   title={Uniform distribution of points on a sphere and certain ergodic
   properties of solutions of linear ordinary differential equations in a
   complex domain},
   language={Russian},
   journal={Dokl. Akad. Nauk SSSR},
   volume={148},
   date={1963},
   pages={9--12},
   issn={0002-3264},
   %review={\MR{0150374}},
}

\bib{BdlHV}{book}{
   author={Bekka, Bachir},
   author={de la Harpe, Pierre},
   author={Valette, Alain},
   title={Kazhdan's property (T)},
   series={New Mathematical Monographs},
   volume={11},
   publisher={Cambridge University Press, Cambridge},
   date={2008},
   pages={xiv+472},
   isbn={978-0-521-88720-5},
   %review={\MR{2415834}},
}
		
\bib{Bog}{book}{
   author={Bogachev, V. I.},
   title={Measure theory. Vol. I, II},
   publisher={Springer-Verlag, Berlin},
   date={2007},
   pages={Vol. I: xviii+500 pp., Vol. II: xiv+575},
   isbn={978-3-540-34513-8},
   isbn={3-540-34513-2},
   %review={\MR{2267655}},
}

\bib{Bou}{article}{
   author={Bourdon, Marc},
   title={Structure conforme au bord et flot g\'eod\'esique d'un ${\rm
   CAT}(-1)$-espace},
   language={French, with English and French summaries},
   journal={Enseign. Math. (2)},
   volume={41},
   date={1995},
   number={1-2},
   pages={63--102},
   issn={0013-8584},
   %review={\MR{1341941}},
}

\bib{BouGan}{article}{
   author={Bourgain, J.},
   author={Gamburd, A.},
   title={A spectral gap theorem in ${\rm SU}(d)$},
   journal={J. Eur. Math. Soc. (JEMS)},
   volume={14},
   date={2012},
   number={5},
   pages={1455--1511},
   issn={1435-9855},
   %review={\MR{2966656}},
}

\bib{BRS1}{article}{
   author={Bourgain, Jean},
   author={Sarnak, Peter},
   author={Rudnick, Ze\'{e}v},
   title={Local statistics of lattice points on the sphere},
   conference={
      title={Modern trends in constructive function theory},
   },
   book={
      series={Contemp. Math.},
      volume={661},
      publisher={Amer. Math. Soc., Providence, RI},
   },
   date={2016},
   pages={269--282},
   %review={\MR{3489563}},
   %={10.1090/conm/661/13287},
}

\bib{BRS2}{article}{
   author={Bourgain, J.},
   author={Rudnick, Z.},
   author={Sarnak, P.},
   title={Spatial statistics for lattice points on the sphere I: Individual
   results},
   journal={Bull. Iranian Math. Soc.},
   volume={43},
   date={2017},
   number={4},
   pages={361--386},
   issn={1017-060X},
   %review={\MR{3711836}},
}

\bib{BoyPin}{article}{
   author={Boyer, Adrien},
   author={Antoine, Pinochet-Lobos},
   title={An ergodic theorem for the quasi-regular representation of the free group},
   %language={French, with Engl},
   journal={arXiv:1601.00668},
   %volume={41},
   date={2016},
   %number={1-2},
   pages={1--9},
   issn={0013-8584},
   %review={\MR{1341941}},
   }

\bib{Zag}{collection}{
   author={Bruinier, Jan Hendrik},
   author={van der Geer, Gerard},
   author={Harder, G\"unter},
   author={Zagier, Don},
   title={The 1-2-3 of modular forms},
   series={Universitext},
   note={Lectures from the Summer School on Modular Forms and their
   Applications held in Nordfjordeid, June 2004;
   Edited by Kristian Ranestad},
   publisher={Springer-Verlag, Berlin},
   date={2008},
   pages={x+266},
   isbn={978-3-540-74117-6},
   %review={\MR{2385372}},
}

\bib{Clo}{article}{
   author={Clozel, L.},
   title={Automorphic forms and the distribution of points on
   odd-dimensional spheres},
   journal={Israel J. Math.},
   volume={132},
   date={2002},
   pages={175--187},
   issn={0021-2172},
   %review={\MR{1952619}},
}

\bib{COU}{article}{
   author={Clozel, Laurent},
   author={Oh, Hee},
   author={Ullmo, Emmanuel},
   title={Hecke operators and equidistribution of Hecke points},
   journal={Invent. Math.},
   volume={144},
   date={2001},
   number={2},
   pages={327--351},
   issn={0020-9910},
   %review={\MR{1827734}},
}

\bib{Coh}{article}{
   author={Cohen, Joel M.},
   title={Operator norms on free groups},
   language={English, with Italian summary},
   journal={Boll. Un. Mat. Ital. B (6)},
   volume={1},
   date={1982},
   number={3},
   pages={1055--1065},
   %review={\MR{683492}},
}	

\bib{Colin}{article}{
   author={Colin de Verdi\`ere, Yves},
   title={Distribution de points sur une sph\`ere (d'apr\`es Lubotzky, Phillips
   et Sarnak)},
   language={French},
   note={S\'eminaire Bourbaki, Vol.\ 1988/89},
   journal={Ast\'erisque},
   number={177-178},
   date={1989},
   pages={Exp.\ No.\ 703, 83--93},
   issn={0303-1179},
   %review={\MR{1040569}},
}

\bib{Val}{book}{
   author={Davidoff, Giuliana},
   author={Sarnak, Peter},
   author={Valette, Alain},
   title={Elementary number theory, group theory, and Ramanujan graphs},
   series={London Mathematical Society Student Texts},
   volume={55},
   publisher={Cambridge University Press, Cambridge},
   date={2003},
   pages={x+144},
   isbn={0-521-82426-5},
   isbn={0-521-53143-8},
   %review={\MR{1989434}},
}  

\bib{Del}{article}{
   author={Deligne, Pierre},
   title={La conjecture de Weil. I},
   language={French},
   journal={Inst. Hautes \'Etudes Sci. Publ. Math.},
   number={43},
   date={1974},
   pages={273--307},
   issn={0073-8301},
   %review={\MR{0340258}},
}

\bib{dlH}{article}{
   author={de la Harpe, Pierre},
   title={On simplicity of reduced $C^\ast$-algebras of groups},
   journal={Bull. Lond. Math. Soc.},
   volume={39},
   date={2007},
   number={1},
   pages={1--26},
   issn={0024-6093},
   %review={\MR{2303514}},
} 

\bib{DudGri}{article}{
   author={Dudko, Artem},
   author={Grigorchuk, Rostislav},
   title={On spectra of Koopman, groupoid and quasi-regular representations},
   journal={J. Mod. Dyn.},
   volume={11},
   date={2017},
   pages={99--123},
   issn={1930-5311},
   %review={\MR{3627119}},
}  

\bib{EMV}{article}{
   author={Ellenberg, Jordan S.},
   author={Michel, Philippe},
   author={Venkatesh, Akshay},
   title={Linnik's ergodic method and the distribution of integer points on
   spheres},
   conference={
      title={Automorphic representations and $L$-functions},
   },
   book={
      series={Tata Inst. Fundam. Res. Stud. Math.},
      volume={22},
      publisher={Tata Inst. Fund. Res., Mumbai},
   },
   date={2013},
   pages={119--185},
   %review={\MR{3156852}},
}

\bib{Fin}{article}{
    author={Finkelshtein, Vladimir},
    title={Diophantine properties of groups of toral automorphisms},
    journal={arXiv:1607.06019},
    date={2016},
 }
 
 \bib{GabGal}{article}{
    author={Gabber, Ofer},
    author={Galil, Zvi},
    title={Explicit constructions of linear-sized superconcentrators},
    note={Special issued dedicated to Michael Machtey},
    journal={J. Comput. System Sci.},
    volume={22},
    date={1981},
    number={3},
    pages={407--420},
    issn={0022-0000},
    %review={\MR{633542}},
 }
 
 \bib{GanVar}{book}{
    author={Gangolli, Ramesh},
    author={Varadarajan, V. S.},
    title={Harmonic analysis of spherical functions on real reductive groups},
    series={Ergebnisse der Mathematik und ihrer Grenzgebiete [Results in
    Mathematics and Related Areas]},
    volume={101},
    publisher={Springer-Verlag, Berlin},
    date={1988},
    pages={xiv+365},
    isbn={3-540-18302-7},
    %review={\MR{954385}},
    %doi={10.1007/978-3-642-72956-0},
 }

 \bib{GorNevBul}{article}{
    author={Gorodnik, Alexander},
    author={Nevo, Amos},
    title={Quantitative ergodic theorems and their number-theoretic
    applications},
    journal={Bull. Amer. Math. Soc. (N.S.)},
    volume={52},
    date={2015},
    number={1},
    pages={65--113},
    issn={0273-0979},
    %review={\MR{3286482}},
 }

\bib{HarCha}{article}{
   author={Harish-Chandra},
   title={Two theorems on semi-simple Lie groups},
   journal={Ann. of Math. (2)},
   volume={83},
   date={1966},
   pages={74--128},
   issn={0003-486X},
   %review={\MR{0194556}},
   %doi={10.2307/1970472},
}
   
\bib{Kes}{article}{
   author={Kesten, Harry},
   title={Symmetric random walks on groups},
   journal={Trans. Amer. Math. Soc.},
   volume={92},
   date={1959},
   pages={336--354},
   issn={0002-9947},
   %review={\MR{0109367}},
}

\bib{KesFull}{article}{
   author={Kesten, Harry},
   title={Full Banach mean values on countable groups},
   journal={Math. Scand.},
   volume={7},
   date={1959},
   pages={146--156},
   issn={0025-5521},
   %review={\MR{0112053}},
}

\bib{Kuh}{article}{
   author={Kuhn, M. Gabriella},
   title={Amenable actions and weak containment of certain representations
   of discrete groups},
   journal={Proc. Amer. Math. Soc.},
   volume={122},
   date={1994},
   number={3},
   pages={751--757},
   issn={0002-9939},
   %review={\MR{1209424}},
}

\bib{Lub}{book}{
   author={Lubotzky, Alexander},
   title={Discrete groups, expanding graphs and invariant measures},
   series={Modern Birkh\"auser Classics},
   note={With an appendix by Jonathan D. Rogawski;
   Reprint of the 1994 edition},
   publisher={Birkh\"auser Verlag, Basel},
   date={2010},
   pages={iii+192},
   isbn={978-3-0346-0331-7},
   %review={\MR{2569682}},
   %doi={10.1007/978-3-0346-0332-4},
}	  

\bib{LPS-1}{article}{
   author={Lubotzky, A.},
   author={Phillips, R.},
   author={Sarnak, P.},
   title={Hecke operators and distributing points on the sphere. I},
   note={Frontiers of the mathematical sciences: 1985 (New York, 1985)},
   journal={Comm. Pure Appl. Math.},
   volume={39},
   date={1986},
   number={S, suppl.},
   pages={S149--S186},
   issn={0010-3640},
   %review={\MR{861487}},
}
   
\bib{LPS-2}{article}{
   author={Lubotzky, A.},
   author={Phillips, R.},
   author={Sarnak, P.},
   title={Hecke operators and distributing points on $\sss^2$. II},
   journal={Comm. Pure Appl. Math.},
   volume={40},
   date={1987},
   number={4},
   pages={401--420},
   issn={0010-3640},
   %review={\MR{890171}},
} 

\bib{Mar}{book}{
   author={Margulis, G. A.},
   title={Discrete subgroups of semisimple Lie groups},
   series={Ergebnisse der Mathematik und ihrer Grenzgebiete (3) [Results in
   Mathematics and Related Areas (3)]},
   volume={17},
   publisher={Springer-Verlag, Berlin},
   date={1991},
   pages={x+388},
   isbn={3-540-12179-X},
   %review={\MR{1090825}},
   %doi={10.1007/978-3-642-51445-6},
} 

\bib{ParSar}{article}{
   author={Parzanchevski, Ori},
   author={Sarnak, Peter},
   title={Super-golden-gates for $PU(2)$},
   journal={Adv. Math.},
   volume={327},
   date={2018},
   pages={869--901},
   issn={0001-8708},
   %review={\MR{3762004}},
   %doi={10.1016/j.aim.2017.06.022},
}

\bib{Pis}{article}{
   author={Pisier, Gilles},
   title={Quadratic forms in unitary operators},
   journal={Linear Algebra Appl.},
   volume={267},
   date={1997},
   pages={125--137},
   issn={0024-3795},
   %review={\MR{1479116}},
}

\bib{SalWoe}{article}{
   author={Saloff-Coste, Laurent},
   author={Woess, Wolfgang},
   title={Transition operators on co-compact $G$-spaces},
   journal={Rev. Mat. Iberoam.},
   volume={22},
   date={2006},
   number={3},
   pages={747--799},
   issn={0213-2230},
   %review={\MR{2320401}},
}

\bib{Sev}{article}{
     author={S\'evennec, Bruno},
     title={Mesure invariante et \'equir\'epartition dans les groupes compacts},
     language={French, with English and French summaries},
     conference={
        title={Autour du centenaire Lebesgue},
     },
     book={
        series={Panor. Synth\`eses},
        volume={18},
        publisher={Soc. Math. France, Paris},
     },
     date={2004},
     pages={63--84},
     %review={\MR{2143415}},
  }
  
  \bib{ShaTata}{article}{
     author={Shalom, Yehuda},
     title={Random ergodic theorems, invariant means and unitary
     representation},
     conference={
        title={Lie groups and ergodic theory},
        address={Mumbai},
        date={1996},
     },
     book={
        series={Tata Inst. Fund. Res. Stud. Math.},
        volume={14},
        publisher={Tata Inst. Fund. Res., Bombay},
     },
     date={1998},
     pages={273--314},
     %review={\MR{1699368}},
  }
  
  \bib{ShaAnnals}{article}{
     author={Shalom, Yehuda},
     title={Rigidity, unitary representations of semisimple groups, and
     fundamental groups of manifolds with rank one transformation group},
     journal={Ann. of Math. (2)},
     volume={152},
     date={2000},
     number={1},
     pages={113--182},
     issn={0003-486X},
     %review={\MR{1792293}},
  }

\bib{Woe}{book}{
   author={Woess, Wolfgang},
   title={Random walks on infinite graphs and groups},
   series={Cambridge Tracts in Mathematics},
   volume={138},
   publisher={Cambridge University Press, Cambridge},
   date={2000},
   pages={xii+334},
   isbn={0-521-55292-3},
   %review={\MR{1743100}},
}

\end{biblist}
\end{bibdiv}
\end{document}